\newtheorem{thm}{Theorem}[section]
\newtheorem{cor}[thm]{Corollary}
\newtheorem{lem}[thm]{Lemma}
\newtheorem{prop}[thm]{Proposition}
\def\enne{\mathbb{N}}
\def\zeta{\mathbb{Z}}
\def\erre{\mathbb{R}}
\def\Rz{\mathbb{R}}
\def\P{\mathbb{P}}
\def\E{\mathop{{}\mathbb{E}}}
\def\I{\mathcal I}
\def\cL{\mathscr{L}}
\def\cF{\mathscr{F}}
\def\eps{\varepsilon}
\def\cP{\mathscr{P}}
\def\OO{\mathcal{O}}
\renewcommand{\d}{{\mathrm d}}
\def\beq{\begin{equation}}
\def\eeq{\end{equation}}
\def\to{\rightarrow}
\def\wto{\rightharpoonup}
\def\wstarto{\stackrel{*}{\rightharpoonup}}
\def\embed{\hookrightarrow}
\def\norm #1{\left\|#1\right\|}
\def\sp #1#2{\left<#1,#2\right>}
\newcommand\ip\sp
\newcommand{\haz}{\widehat}
\begin{document}
\title[Doubly nonlinear stochastic evolution equations II]
{Doubly nonlinear stochastic evolution equations II}

\author{Luca Scarpa}
\address[Luca Scarpa]{Faculty of Mathematics, University of Vienna, 
Oskar-Morgenstern-Platz 1, 1090 Vienna, Austria.}
\email{luca.scarpa@univie.ac.at}
\urladdr{http://www.mat.univie.ac.at/$\sim$scarpa}

\author{Ulisse Stefanelli}
\address[Ulisse Stefanelli]{Faculty of Mathematics,  University of Vienna, Oskar-Morgenstern-Platz 1, A-1090 Vienna, Austria,
Vienna Research Platform on Accelerating Photoreaction Discovery, University of Vienna, W\"ahringer Str. 17, 1090 Vienna, Austria,$\&$ Istituto di Matematica Applicata e Tecnologie Informatiche ``E. Magenes'' - CNR, v. Ferrata 1, I-27100 Pavia, Italy}
\email{ulisse.stefanelli@univie.ac.at}
\urladdr{http://www.mat.univie.ac.at/$\sim$stefanelli}

\subjclass[2010]{35K55, 35R60, 60H15}

\keywords{Doubly nonlinear stochastic equations, 
strong and martingale solutions, existence, 
maximal monotone operators, generalized It\^o's formula}   

\begin{abstract}
Complementing the analysis in \cite{ScarStef-SDNL}, 
we investigate the well-posedness of SPDEs problems of doubly
nonlinear type. These arise ubiquitously in
the modelization of dissipative media and correspond to generalized balance laws
between conservative and nonconservative dynamics. We extend the reach
of the classical deterministic case by allowing for stochasticity. The
existence of martingale solutions is proved via a regularization
technique, hinging on the validity of a It\^o formula in a minimal
regularity setting. Under additional assumptions, the well-posedness of
stochastically strong solutions is also shown. 
\end{abstract}

\maketitle


\section{Introduction}
\setcounter{equation}{0}
\label{sec:intro}

This is the second paper of a series devoted to the existence of solutions to
stochastic doubly nonlinear evolution equations. Complementing the
analysis of \cite{ScarStef-SDNL}, we focus here on equations which, 
in the {\it deterministic} case, take the form
\begin{equation}
A(\partial_t u(t) ) + \partial \haz B(u(t)) \ni
F(t,u(t))\,.\label{eq:biot}
\end{equation}
Here, $t\in [0,T] \mapsto u(t)\in H$ represents the state of a
physical systems at time $t$, seen as an element in the separable Hilbert space
$H$, $T>0$ is some final reference time, and $\partial_t$ is the time
derivative. The functional $\haz B: H \to (-\infty,+\infty]$ denotes
the {\it energy} of the system, here assumed to be convex, and
$\partial \haz B$ is the corresponding subdifferential. The term $t
\mapsto F(t,u(t)) \in H$ represents further external actions. The possibly
multivalued, maximal monotone operator $A:H \to 2^H$ with $0\in A(0)$ corresponds to
{\it dissipation} instead. More precisely, the scalar $(A(\partial_t
u), \partial_t u)_H\geq 0$ represents the instantaneous
dissipation of the system corresponding to the rate $\partial_t u$, 
where $(\cdot,\cdot)_H$ is the scalar product in $H$.

Given these provisions,
relation \eqref{eq:biot} is nothing but the balance between 
{\it conservative} actions, modelled by $\partial \haz B(u)$, 
{\it dissipative} actions, encoded in $A(\partial_tu)$, 
and {\it external} actions, namely $F(t,u)$.  By
additionally letting $F(t,u) = \partial_u\haz F(t,u)$, where
$\haz F(t,u)$ denotes the work of external actions at time $t$
on the state $u\in H$, and assuming
enough smoothness, one can check in particular the
{\it dissipation inequality}
\begin{equation}
\frac{\d}{\d t}\big(\haz B(u) - \haz F(\cdot,u)\big)  + \partial_t \haz F(\cdot,u)= (\partial \haz B(u) -
F(\cdot,u),\partial_t u )_H   = -(A(\partial_t
u), \partial_t u)_H \leq 0\,.\label{eq:dissipativity}
\end{equation}
The left-hand side of this relation features the sum of the variation
of the free
energy of the system $\haz B(u) - \haz F(t,u)$, 
namely the stored part of the energy $\haz B$
which is not contributed from the exterior,
plus the power of external actions. In particular,
\eqref{eq:dissipativity} entails that this balance is negative along
all trajectories, expressing indeed the dissipativity of the
system. Indeed, the generalized balance \eqref{eq:biot} is prototypical of
dissipative systems and 
arises
ubiquitously in applications, especially in relation with
Thermomechanics and dissipative media: 
see  \cite{Moreau70,Moreau71}
and \cite{Germain73}
for justifications. Note that 
the existence of a potential $\haz F(t,u)$ is instrumental to
obtaining relation \eqref{eq:dissipativity}, 
being however not required for the existence theory.

The linear case where $A$ is chosen to be the identity in $H$ (not
specifically in focus here,
however), corresponds to perturbed gradient flows and is rather classical \cite{Ambrosio}.  
For existence
results for $A$ nonlinear and coercive the reader is referred to {\sc Colli \& Visintin}
\cite{Colli-Vis90} and {\sc Colli} \cite{Colli92ter}  (see also  \cite{Barbu75},  
\cite{Arai79}, and  \cite{Senba86} among others). Besides
existence, structural stability \cite{AiziYan}, perturbations and
long-time behavior \cite{G11,G13,Roubicek,SSS,Segatti}, and
variational characterization of solutions
\cite{AS1,AS2,Ste08a} have also been considered.  The
case $A$ nonlinear and $0$-homogeneous (not included in our analysis)
is related with the modeling of rate-independent evolution: see
\cite{Mielke-Roubicek} for a recent comprehensive collection of
results.

Our aim is to study a {\it stochastic} version of equation
\eqref{eq:biot}, namely \eqref{eq'} below, 
taking into account possible random disturbances occurring 
in the evolution and affecting the energy--dissipation balance.
Let a
filtered probability space $(\Omega,\cF,(\cF_t)_{t\in[0,T]},\P)$  and
a cylindrical Wiener process $W$ on a separable Hilbert space
$U$ be given.
Solutions $(\omega,t) \mapsto u(\omega,t)\in H$ are assumed to be It\^o
processes of the form 
\beq\label{ito_process}
  u(t)=u^d(t) + \int_0^tu^s(s)\,\d W(s)\,, \quad t\in[0,T]\,,
\eeq
where $u^d$ is an absolutely continuous process
and $u^s$ is a $W$-stochastically integrable process. 
For processes in the form \eqref{ito_process}, 
the averaged variation rate is given exactly by 
the contribution $\E\partial_t u^d$, where
$\E$ denotes expectation.
It is then natural to consider 
the differentiable process $u^d$ as the one accounting for dissipation, 
whereas the stochastic integral of $u^s$ has to be interpreted 
as the random perturbation of
the trajectory of $u$. 
Consequently, as we are interested in nonlinear dissipation evolutions, 
we prescribe the action of the dissipation $A$ directly on 
the rate $\partial_t u^d$, by keeping the contribution $u^s\,\d W$ as 
a separate random disturbance affecting the trajectory of $u$.
Bearing in mind these considerations,
we are interested in 
the following doubly nonlinear stochastic
evolution equation
\begin{equation}
  \label{eq'}
  A(\partial_tu^d)\, \d t + u^s \, \d W  + \partial \haz B(u)\, \d t
  \ni F(\cdot,u)\, \d t +
  G(\cdot,u)\, \d W\,,
\end{equation} 
where $G$ is a suitable stochastically integrable operator, possibly depending on $u$.
This may be equivalently seen as a system 
\beq\label{eq'_bis}
  \begin{cases}
  A(\partial_tu^d)  + \partial \haz B(u)
  \ni F(\cdot,u)\,,\\
  u^s=G(\cdot,u)\,,
  \end{cases}
\eeq
under the constraint that $u$ is an It\^o process in the form \eqref{ito_process}.
By taking $G=0$, the latter entails $u^s=0$, so that $u=u^d$, and
the equation reduces to the
deterministic case of \eqref{eq:biot}. 
In the special case where $A$ is the identity in $H$,
the doubly nonlinear problem \eqref{eq'} reduces instead to the classical 
well-understood stochastic evolution equation 
\[
  \d u + \partial \haz B(u)\,\d t \ni F(\cdot,u)\,\d t + G(\cdot,u)\,\d W\,.
\]
More generally, equation \eqref{eq'}, or equivalently \eqref{eq'_bis}, 
features a possibly nonquadratic
dissipation in terms of $\partial_t u^d$, and is the natural 
stochastic version of the deterministic evolution \eqref{eq:biot}.

The consistency of the doubly nonlinear stochastic evolution equation \eqref{eq'}
is particularly evident at the level of the energy--dissipation balance.
Indeed, in Proposition \ref{prop:ito} below we prove a  
refined It\^o formula which would allow us to argue along the lines of   
\eqref{eq:dissipativity} and obtain 
the averaged dissipativity inequality 
\begin{align}
    \frac{\d}{\d t}\E\left(\haz B(u) - \haz F(\cdot,u) - \frac12 \E ({\rm Tr} \, L(\cdot,u))\right)  
    + \E\partial_t \haz F(\cdot,u)=  
    -\E (A(\partial_t u), \partial_t u)_H \leq 0\,.\label{eq:dissipativity2}
\end{align}
Here, $L(\cdot,u) = G (\cdot,u)
G^*(\cdot,u) D_{\mathcal G} \partial \haz B(\cdot)$, where the symbol
$D_{\mathcal G}$ represents 
the G\^ateaux differential. With respect to the deterministic
dissipation inequality \eqref{eq:dissipativity},
its stochastic version \eqref{eq:dissipativity2} features the additional contribution of the
noise, which effectively enters the energy balance via the term in
$L$. 
The dissipativity inequality \eqref{eq:dissipativity2} will be crucial, as it 
allows to obtain uniform estimates on the solutions of the doubly nonlinear 
stochastic problem \eqref{eq'}.

The main result of this note is to prove that the Cauchy problem for equation
\eqref{eq'} admits an analytically strong martingale solution if $A$ is coercive and linearly bounded and the sublevels of $\haz B$ are compact, see
Theorem~\ref{thm1}. The key step is to prove a It\^o formula under
minimal regularity assumptions, see Proposition~\ref{prop:ito}. This in
turn allows us to perform a regularization procedure and identify the
corresponding nonlinear limits by a lower semicontinuity argument.
This technique is new and can be expected be useful elsewhere.
Eventually, if $A$ is strongly monotone and $\partial \haz B$ is linear
and self-adjoint, solutions are actually strong in probability and
unique, see Theorem~\ref{thm2}.

  The main mathematical difficulty in tackling the doubly nonlinear problem 
  \eqref{eq'} is duplex, and consists on the one hand
  in  proving 
  some uniform estimates on the solutions and on the other in identifying 
  the two nonlinearities $A(\partial_tu^d)$ and $\partial \widehat B(u)$.
  These two issues are indeed implicitly related to one another, and 
  their proof hinges on the availability of an accurately refined It\^o's formula in Hilbert spaces.
  First of all, uniform estimates on suitably approximated solutions of \eqref{eq'}
  can be obtained,  provided to show the dissipativity inequality for a 
  regularized equation, which is easier to handle.
  Unlike the deterministic setting, the dissipativity inequality cannot be achieved 
  by simply testing the equation by $\partial_t u^d$, due to the presence of the noise terms.
  Instead, it is obtained by proving an It\^o formula for a suitable 
  regularization of the energy $\widehat B$ and by comparison in the equation itself.
  As this first step is carried out on (relatively) arbitrary regularized problems, 
  one has enough regularity on the approximated energy at disposal 
  and uniform estimates on the solutions can be obtained.
  The delicate point, however, consists in passing to the limit.
  Indeed, as the problem exhibits two nonlinearities, the identification 
  procedure is absolutely nontrivial. The main idea is to identify $\partial B(u)$
  by means of (stochastic) compactness arguments. Once this is done, 
  we characterize $A(\partial_tu^d)$ using lower semicontinuity techniques 
  (the so-called $\limsup$ argument): this however strongly relies on the 
  availability of an energy--dissipation {\it equality} at the limit.
  From the mathematical point of view, we are then forced to 
  prove first an {\it exact} It\^o formula for the energy $\widehat B$
  (not regularized).
  While It\^o {\it inequalities} are available in literature in connection with 
  existence of strong solutions to SPDEs for example \cite{Gess},
  to the best of our knowledge It\^o {\it equalities} associated to 
  a general energy $\widehat B$ have not received attention elsewhere,
  the main problem being the fact that $\widehat B$ is not everywhere defined in $H$,
  having compact sublevels.
  For these reasons, the analytical preliminaries contained here have 
  independent relevance on their own.

To our knowledge, this paper delivers the first existence theory   in    
the doubly nonlinear frame of \eqref{eq'}. In the case of a linear and
nondegenerate operator $A$, existence results can be traced back 
to the classical theory by {\sc Pardoux}\cite{Pard0,Pard} and {\sc Krylov
\&  Rozovski\u \i},\cite{kr}: see  the  monographs \cite{dapratozab,pr}
for a general overview. In particular, the Cauchy problem for
\eqref{eq'} with $A$
linear and $\partial \haz B$ subhomogeneous has been proved to admit 
strong solutions by {\sc Gess}\cite{Gess}.
Well-posedness 
under more relaxed growth conditions are also available, see 
\cite{mar-scar-diss,mar-scar-ref,mar-scar-erg} for semilinear
equations, \cite{mar-scar-note,mar-scar-div, scar-div} for equations in divergence form,
and \cite{barbu-daprato-rock,orr-scar,scar-SCH} for 
porous-media, Allen-Cahn, and Cahn-Hilliard equations.

In the first paper \cite{ScarStef-SDNL} of this series on existence for doubly nonlinear
stochastic evolution equations, we focused on
relations 
of the form
$$\d A(u) + \partial \haz B(u)\, \d t \ni F(\cdot,u)\, \d t + G(\cdot,u)\, \d
W$$
instead. Here, the operator $A$ is applied directly to $u$ and then
differentiated. Such equations are also very relevant from the
applicative viewpoint and arise in connection with different
nonlinear diffusion situations, including phase transitions,  porous-media, the
Hele-Shaw cell, non-Newtonian fluids, and the Mean-Curvature flow.
Existence for the
latter has been obtained in  \cite{SWZ18} (for $A$ bi-Lipschitz) and in
\cite{ScarStef-SDNL} (for $A$ multivalued). 
The case of $A$ nonlinear and $\partial \haz B$ linear has been originally discussed
in \cite{Barbu-DaPrato} in the setting of the
two-phase stochastic Stefan problem, see also 
\cite{BBT14,KM16}.

Let us also point out that in the case where $A=\partial \widehat A$ is a subdifferential 
of a certain convex lower semicontinuous function $\widehat A$ on $H$,
the doubly nonlinear equation \eqref{eq'} fits well
in the framework of the stochastic Weighted Energy--Dissipation approach,
and can be thus seen as limit of convex minimization problems. 
Such technique has been show to be effective when $A$ is the identity 
\cite{ScarStef-WED}. The study of the genuinely doubly nonlinear
case can also be considered.

 We give the detail of our setting and state the existence results in
 Section \ref{sec:main}. Section \ref{sec:ito} is devoted to the proof
 of the above-mentioned generalized It\^o formula. The proof of
 Theorem \ref{thm1}, namely the existence of martingale solutions, is
 detailed in Section \ref{sec:proof1}. Finally, probabilistically strong
 solutions are discussed in Section \ref{sec:proof2}. We conclude in
 Section \ref{sec:appl} by presenting
 an example of a concrete PDE to which the abstract analysis can be applied.


\section{Setting and main results}
\label{sec:main}
In this section we introduce the setting and assumptions
on the data of the problem (Subsections
\ref{ssec:setting}--\ref{ssec:assumptions}) and we present the main
results of the work (Subsection \ref{ssec:main}).

\subsection{Setting and notation}
\label{ssec:setting} 
Let $(\Omega,\cF,(\cF_t)_{t\in[0,T]},\P)$ be a filtered probability space
satisfying the usual conditions, where $T>0$ is a fixed final
reference time, and
let also $W$ be a cylindrical Wiener process on a separable Hilbert space $U$.
We fix once and for all a complete orthonormal system $(u_j)_j$ of~$U$. 

For every Banach spaces $E_1$ and $E_2$, the symbol $\cL(E_1,E_2)$
denotes the space of linear continuous operator from $E_1$ to $E_2$
endowed with the norm-topology.
The space $\cL(E_1,E_2)$ endowed with the strong and weak operator topology 
is denoted by $\cL_s(E_1,E_2)$ and $\cL_w(E_1,E_2)$, respectively.
If $E_1$ and $E_2$ are also Hilbert spaces, the space of trace-class and 
Hilbert-Schmidt operators from $E_1$ to $E_2$ are denoted by 
$\cL^1(E_1,E_2)$ and $\cL^2(E_1,E_2)$, respectively.

In the paper, $V$ is separable reflexive Banach space and $H$ is a
Hilbert space such that $V\embed H$ continuously, densely, and compactly.
As usual, we identify $H$ with its dual, so that we have the Gelfand triple
\[
  V\embed H\embed V^*\,.
\]
Without loss of generality, we suppose that there exists 
a family of regularizing 
linear operators $(R_\lambda)_{\lambda>0}\subset\cL(H,V)$ such that 
$R_\lambda\to I$ in $\cL_s(V,V)$ as $\lambda\searrow0$:
this is trivially satisfied in the majority of interesting examples, for example 
when $V$ is a Sobolev space.

The progressive sigma algebra on $\Omega\times[0,T]$ is denoted by $\cP$.
We use the classical symbols $L^s(\Omega; E)$ and $L^r(0,T; E)$
for the spaces of strongly measurable Bochner-integrable functions 
on $\Omega$ and $(0,T)$, respectively, for all
$s,r\in[1,+\infty]$ and for every Banach space $E$.
If $s,r\in[1,+\infty)$ we use 
$L^s_\cP(\Omega;L^r(0,T; E))$ to indicate
that measurability is intended with respect to $\cP$.
In the case that $s\in(1,+\infty)$, $r=+\infty$, and $E$ is a separable,
we set 
\begin{align*}
  &L^s_\cP(\Omega; L^\infty(0,T; E^*)):=
  \big\{v:\Omega\to L^\infty(0,T; E^*) \text{ weakly*
      progressively measurable}\nonumber\\
&\hspace{48mm} \text{with} \ 
  \E\norm{v}_{L^\infty(0,T; E^*)}^s<\infty
  \big\}\,,
\end{align*}
and recall that by 
\cite[Thm.~8.20.3]{edwards} we have the identification
\[
L^s_\cP(\Omega; L^\infty(0,T; E^*))=
\left(L^{\frac{s}{s-1}}_\cP(\Omega; L^1(0,T; E))\right)^*\,.
\]

\subsection{Spaces of It\^o processes} We introduce here some specific
notation 
for classes of It\^o-type processes. Let $E,F$ be
two separable reflexive Banach spaces and $K$ be a separable Hilbert space, 
such that $E,K\embed F$ continuously. For every $r_1,r_2\in[1,+\infty)$
we use the notation 
\[
  \I^{r_1,r_2}(E,K):=L^{r_1}_\cP(\Omega; W^{1,r_1}(0,T; E)) \oplus 
  L^{r_2}_\cP(\Omega; L^2(0,T; \cL^2(U,K)))\cdot W\,.
\]
In other words, $\I^{r_1,r_2}(F,K)$ is the space of all processes $u$ that can be written as
\[
  u(t)=u^d(t) + \int_0^tu^s(r)\,\d W(r)\,, \quad t\in[0,T]\,,
\]
for some $u^d\in L^{r_1}_\cP(\Omega; W^{1,r_1}(0,T; E))$ and 
$u_s\in L^{r_2}_\cP(\Omega; L^2(0,T; \cL^2(U,K)))$. Clearly, in such case we have
\[
  u(t)=u^d(0) + \int_0^t\partial_tu^d(s)\,\d s + \int_0^tu^s(r)\,\d W(r)\,, \quad t\in[0,T]\,,
\]
Note that the sum is actually a direct sum, and the representation $u=u^d+u^s\cdot W$
is unique.

\subsection{Assumptions}
\label{ssec:assumptions} 
We now introduce and
comment the assumptions on operators and data. These will be assumed
throughout, without further specific mention. 

\begin{itemize}\setlength\itemsep{1.5em}
\item {\bf Assumption U0:} $u_0 \in V$.

\item{\bf Assumption A:} $A:H\to2^H$ is maximal monotone and there exist
  two constants $c_A,C_A>0$ such that
  \begin{align*}
    \norm{y}_H\leq C_A(1+\norm{x}_H) \qquad&\forall\,x\in H\,,\quad\forall\,y\in A(x)\,,\\
    (y,x)_H\geq c_A\norm{x}_H^2 - c_A^{-1} \qquad&\forall\,x\in
    D(A)\,,\quad\forall\,y\in A(x)\,.
  \end{align*}

\item   {\bf Assumption B1:} $\widehat B:H\to[0,+\infty]$ is convex lower
  semicontinuous with $\widehat B(0)=0$ and $B:=\partial\widehat
  B:H\to 2^H$ is its subdifferential.   We ask that
  \[
  V\subseteq D(\widehat B)\,,
  \]
  and $\widehat B$ is continuous at some point of $V$: this implies
  that $\partial(\widehat B_{|_V}):V\to2^{V^*}$
  coincides with $B$ on $D(B)$, hence $B$ can be also seen as a
  maximal monotone operator from $V$ to $2^{V^*}$. We assume
  that $B$ is single-valued, and that there exist constants $c_B>0$
  and $p\geq2$ such that
  \begin{align*}
    (B(x_1)-B(x_2),x_1-x_2)_H\geq c_B\norm{x_1-x_2}_V^p
    \qquad\forall\,x_1,x_2\in D(B)\,.
  \end{align*}

 \item {\bf Assumption B2:}  $B:V\to V^*$ is G\^ateaux-differentiable with
  \[
  D_{\mathcal G}B\in C^0(V; \cL_w(V,V^*))
  \]
  and there exists a constant $C_B>0$ such that
  \[
  \norm{D_{\mathcal G}B(x)}_{\cL(V,V^*)} \leq C_B\left(1 +
    \norm{x}_V^{p-2}\right) \qquad\forall\,x\in V\,.
  \]

\item   {\bf Assumption F:} $F:[0,T]\times V\to H$ is measurable, and there
  exist a constant $C_F>0$ and $h_F\in L^1(0,T)$ such that
  \begin{align*}
    \norm{F(\cdot, x_1)-F(\cdot,x_2)}^2_H&\leq C_F
    \left(1+\norm{x_1}_V^{p-2}
      + \norm{x_2}_V^{p-2}\right)\norm{x_1-x_2}_V^2 &&\forall\,x_1,x_2\in V\,,\\
    \norm{F(\cdot,x)}_H^2&\leq h_F(\cdot) + C_F\norm{x}_V^p
    &&\forall\,x\in V\,.
  \end{align*}

\item  {\bf Assumption G:} $G:[0,T]\times H\to \cL^2(U,H)$  is
  measurable,  $G([0,T]\times V)\subset\cL(U,V)$, 
  and there exist a constant $C_G>0$ such that
  \begin{align*}
    &\norm{G(\cdot, x_1)-G(\cdot,x_2)}_{\cL^2(U,H)}
    \leq C_G\norm{x_1-x_2}_H &&\forall\,x_1,x_2\in H\,,\\
    &\norm{G(\cdot,x)}_{\cL^2(U,H)}\leq
    C_G\left(1+\norm{x}_V^\nu\right) &&\forall\,x\in V\,,
  \end{align*}
  where $\nu=1$ if $p>2$ and $\nu\in(0,1)$ if $p=2$.  Moreover, 
  we
  assume that there exists
   $L:[0,T]\times V\to \cL^1(H,H)$ and
  $h_G\in L^1(0,T)$ such that 
  \[
  L(\cdot,x)=G(\cdot,x)G(\cdot,x)^*D_{\mathcal G}B(x)
  \qquad\forall\,x\in V\,,
  \]
  and
  \begin{align*}
    &L(t,\cdot)\in C^0(V; \cL^1(H,H)) \qquad\text{for a.e.~$t\in(0,T)$}\,,\\
    &\norm{L(\cdot,x)}_{\cL^1(H,H)}\leq h_G(\cdot) + C_G\norm{x}_V^p
    \qquad\forall\,x\in V\,.
  \end{align*}
\end{itemize}

Let us comment on this last requirement. Note that by the regularity of 
$G$ and $B$, for every~$t\in(0,T)$ and $x\in V$ we have that 
$G(t,x)\in\cL(U,V)$ and $D_{\mathcal G}B(x)\in\cL(V,V^*)$,
so that $G(t,x)G(t,x)^*D_{\mathcal G}B(x)\in\cL(V,V)$. In the 
last condition,
we are assuming that actually such 
operator extends to a trace class operator 
on $H$. Clearly, any such extension is necessarily unique 
due to the density of $V$ in $H$.
This behaviour is very natural and actually occurs in several relevant 
situations: roughly speaking, it means that the composition
with $GG^*$ has a regularizing effect on $D_{\mathcal G}B(x)$,
due for example to suitable compensations of the singular terms.

\subsection{Main results}
\label{ssec:main}
We are now in the position of stating the main results of the paper. 
Our first result concerns existence of 
analytically strong martingale solutions to
\eqref{eq'}. 
\begin{thm}[Existence of martingale solutions]
  \label{thm1}
  Under the assumptions of Subsection \emph{\ref{ssec:assumptions}},
  there exists a probability space $(\hat\Omega,\hat\cF,\hat\P)$,
  a $U$-cylindrical Wiener process $\hat W$ on it, 
  and a triple $(\hat u, \hat u^d,\hat v)$, with
  \begin{align*}
  &\hat u \in L^p_{\hat \cP}(\hat\Omega; C^0([0,T]; H))\cap 
  L^p_{\hat \cP}(\hat\Omega; L^\infty(0,T; V))\,,
  \qquad \hat u^d \in L^2_{\hat \cP}(\hat\Omega; H^1(0,T; H))\,,\\
  &\hat u\in D(B) \quad\text{a.e.~in } \hat\Omega\times(0,T)\,,\qquad
  B(\hat u) \in L^2_{\hat \cP}(\hat\Omega; L^2(0,T; H))\,,\\
  &\hat v \in L^2_{\hat \cP}(\hat\Omega; L^2(0,T, H))\,,
  \end{align*}
  such that
  \begin{align*}
  &\hat u(t) = u_0 + \int_0^t\partial_t\hat u^d(s)\,\d s + \int_0^tG(s,\hat u(s))\,\d \hat W(s)
  &&\forall\,t\in[0,T]\,,\quad \hat\P\text{-a.s.}\,,\\
  &\hat v(t) + B(\hat u(t)) = F(t,\hat u(t)) 
  &&\text{for a.e.~$t\in(0,T)$}\,,\quad \hat\P\text{-a.s.}\,,\\
  &\hat v(t) \in A(\partial_t \hat u^d(t)) 
  &&\text{for a.e.~$t\in(0,T)$}\,,\quad \hat\P\text{-a.s.}
  \end{align*}
  Furthermore, the following energy equality holds:
  \begin{align*}
  &\widehat B(\hat u(t)) + \int_0^t\left(\hat v(s), \partial_t\hat u^d(s)\right)_H\,\d s \\
  &=\widehat B(u_0) + \int_0^t\left(F(s,\hat u(s)), \partial_t\hat u^d(s)\right)_H\,\d s
  +\frac12\int_0^t\operatorname{Tr}\left[L(s,\hat u(s))\right]\,\d s\\
  &\qquad+\int_0^t\left(B(\hat u(s)), G(s,\hat u(s))\,\d W(s)\right)_H
  \qquad\forall\,t\in[0,T]\,,\quad\hat\P\text{-a.s.}
  \end{align*}
\end{thm}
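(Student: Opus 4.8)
The plan is to construct the solution by a vanishing-regularization scheme and then to obtain the energy equality as an instance of the It\^o formula of Proposition~\ref{prop:ito}. First I would replace $A$ by its Yosida approximation $A_\lambda\colon H\to H$, which is single-valued, Lipschitz continuous, and retains the coercivity and linear growth of Assumption~A with the same constants up to fixed corrections, and — in order to have the classical It\^o calculus at disposal in the first step — also approximate the energy $\widehat B$ (say by its Moreau--Yosida regularization $\widehat B_\lambda\in C^{1,1}(H)$, with subdifferential $B_\lambda$), keeping $F$ and $G$ unchanged. For each frozen $\lambda>0$ the regularized Cauchy problem
\[
  \d u=\partial_t u^d\,\d t+G(\cdot,u)\,\d W\,,\qquad A_\lambda(\partial_t u^d)+B_\lambda(u)=F(\cdot,u)\,,\qquad u(0)=u_0\,,
\]
is solvable by the classical variational theory for SPDEs with (locally) monotone, coercive coefficients, in the spirit of Pardoux and of Krylov--Rozovski\u\i\ and as carried out in the companion paper~\cite{ScarStef-SDNL}; this produces analytically strong solutions $u_\lambda=u^d_\lambda+\int_0^\cdot G(s,u_\lambda)\,\d W(s)$ on the given stochastic basis.

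On the regularized level one applies the classical It\^o formula to $\widehat B_\lambda(u_\lambda)$ and substitutes $B_\lambda(u_\lambda)=F(\cdot,u_\lambda)-A_\lambda(\partial_t u^d_\lambda)$ into the resulting identity. As in the formal computation leading to~\eqref{eq:dissipativity2}, the stochastic integral has zero mean and the It\^o correction produces the trace term in $L$, so one is left with an averaged energy--dissipation balance; the coercivity of $A_\lambda$ converts the dissipation into a control of $\partial_t u^d_\lambda$ in $L^2(0,T;H)$, while the coercivity of $B$ bounds $\widehat B$ from below by a multiple of $\norm{\cdot}_V^p$. Combined with the growth bounds on $F$, $G$, and $L$, Gr\"onwall's lemma, and the Burkholder--Davis--Gundy inequality, this yields bounds uniform in $\lambda$,
\[
  u_\lambda\in L^p_\cP(\Omega;L^\infty(0,T;V))\cap L^p_\cP(\Omega;C^0([0,T];H))\,,\qquad \partial_t u^d_\lambda,\ B_\lambda(u_\lambda)\in L^2_\cP(\Omega;L^2(0,T;H))\,,
\]
where the last bound follows by comparison, since $A_\lambda$ is uniformly linearly bounded and hence so is $v_\lambda:=F(\cdot,u_\lambda)-B_\lambda(u_\lambda)\in A_\lambda(\partial_t u^d_\lambda)$.

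Since only martingale solutions are sought, the next step is a stochastic compactness argument exploiting the compact embeddings of the Gelfand triple $V\embed H\embed V^*$: by a stochastic Aubin--Lions-type criterion $u_\lambda$ is tight in $L^2(0,T;H)$ — it is bounded in $L^2(\Omega;L^2(0,T;V))$, with $u^d_\lambda$ bounded in $L^2(\Omega;W^{1,1}(0,T;H))$ and the stochastic integral bounded in $L^2(\Omega;W^{\alpha,2}(0,T;H))$ for some $\alpha<\tfrac12$ — and it is tight in $C^0([0,T];V^*)$, while the auxiliary processes and the subgradients are handled by weak compactness. Prokhorov's theorem and the Jakubowski--Skorokhod representation then provide a new probability space $(\hat\Omega,\hat\cF,\hat\P)$, a cylindrical Wiener process $\hat W$, and copies of all processes converging $\hat\P$-almost surely in the respective topologies; in particular $u_\lambda\to\hat u$ strongly in $L^2(0,T;H)$, and since $\hat u$ inherits the It\^o-process structure with $H$-continuous absolutely continuous and martingale parts, $\hat u\in C^0([0,T];H)$. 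Passage to the limit in the It\^o equation is immediate for the linear terms and for $G$, which is Lipschitz in the $H$-norm; the weak limit of $B_\lambda(u_\lambda)$ is identified with $B(\hat u)$ by combining the strong $L^2(\hat\Omega\times(0,T);H)$ convergence of $u_\lambda$ with the maximal monotonicity of $B$ as an operator on $H$; feeding this back into the coercivity inequality of Assumption~B1 upgrades the convergence of $u_\lambda$ to strong convergence in $L^p_{\hat\cP}(\hat\Omega;L^p(0,T;V))$, which finally allows one to pass to the limit in $F$ using its growth and continuity. This delivers the first two relations of Theorem~\ref{thm1} and all the stated regularity of $(\hat u,\hat u^d,\hat v)$.

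It remains to prove $\hat v\in A(\partial_t\hat u^d)$, and this is intertwined with the energy equality. The latter is obtained by applying the It\^o formula of Proposition~\ref{prop:ito} \emph{directly} to $\widehat B(\hat u(\cdot))$ on the new probability space — which is legitimate precisely because $\hat u$ is an It\^o process with $\hat u\in L^p_{\hat\cP}(\hat\Omega;L^\infty(0,T;V))$ and $B(\hat u)\in L^2(\hat\Omega\times(0,T);H)$ — and it produces the displayed identity. Taking expectations kills the martingale term, and the resulting energy--dissipation \emph{equality} is exactly the input of the $\limsup$ argument: comparing it with the averaged balances of the regularized problems and using the lower semicontinuity of $\widehat B$ for the terminal term $\E\widehat B(\hat u(T))$, together with the convergence of the $F$-term and of $\tfrac12\E\int_0^T\operatorname{Tr} L(\cdot,u_\lambda)\,\d t$ (guaranteed by the continuity and growth of $L$ in Assumption~G), one deduces
\[
  \limsup_{\lambda}\ \E\int_0^T\big(v_\lambda,\partial_t u^d_\lambda\big)_H\,\d t\ \le\ \E\int_0^T\big(\hat v,\partial_t\hat u^d\big)_H\,\d t\,.
\]
Since $v_\lambda\in A_\lambda(\partial_t u^d_\lambda)$ with $v_\lambda\wto\hat v$ and $\partial_t u^d_\lambda\wto\partial_t\hat u^d$ weakly in $L^2(\hat\Omega\times(0,T);H)$, this is precisely the hypothesis needed to run the Minty--Browder identification for the maximal monotone operator induced by $A$ on $L^2(\hat\Omega\times(0,T);H)$, whence $\hat v\in A(\partial_t\hat u^d)$ almost everywhere. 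I expect this final $\limsup$ step to be the main obstacle. It relies essentially on the availability of the \emph{exact} It\^o formula of Proposition~\ref{prop:ito} at the limit — a mere inequality would not close the argument — and the terminal-energy term is delicate: since the Moreau--Yosida regularization satisfies $\widehat B_\lambda\le\widehat B$, the required bound $\E\widehat B(\hat u(T))\le\liminf_\lambda\E\widehat B_\lambda(u_\lambda(T))$ must be recovered from the convergence $u_\lambda(T)\to\hat u(T)$ supplied by the tightness, the lower semicontinuity of $\widehat B$, and the uniform $V$-bounds.
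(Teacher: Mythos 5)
Your overall architecture coincides with the paper's proof: Yosida regularization of $A$ and $\widehat B$, an It\^o formula for the regularized energy combined with the coercivity of $A$ to obtain uniform estimates, stochastic compactness plus Skorokhod representation, identification of $B$ by strong--weak closure and the strong monotonicity upgrade to $L^p(V)$-convergence, and a final $\limsup$/Minty argument for $A$ resting on the exact It\^o formula of Proposition~\ref{prop:ito}, which also yields the stated energy equality. However, the approximation scheme as you set it up has a genuine gap. You keep $F$ and $G$ unchanged, evaluated at $u_\lambda$. Since the approximate solution is only $H$-valued while $F$ is defined on $[0,T]\times V$, the term $F(\cdot,u_\lambda)$ is not even defined; the paper evaluates $F$ at $R_\lambda J_\lambda^B(u_\lambda)\in V$, and the composition with $R_\lambda\in\cL(H,V)$ is precisely what converts the merely H\"older map $J_\lambda^B:H\to V$ into a Lipschitz-in-$H$ dependence, so that the drift becomes locally Lipschitz. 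More seriously, with $G(\cdot,u_\lambda)$ the It\^o correction $\operatorname{Tr}\left[G^*(\cdot,u_\lambda)D_{\mathcal G}B_\lambda(u_\lambda)G(\cdot,u_\lambda)\right]$ cannot be estimated: Assumption {\bf G} only makes $G(\cdot,x)G^*(\cdot,x)D_{\mathcal G}B(x)$ trace class with \emph{matching} arguments $x$, and it is exactly the choice $u^s_\lambda=G(\cdot,J_\lambda^B(u_\lambda))$ that allows one to rewrite the correction as $\operatorname{Tr}\left[L(\cdot,J_\lambda^B(u_\lambda))D_{\mathcal G}J^B_\lambda(u_\lambda)\right]$ and bound it by $h_G+C_G\norm{J_\lambda^B(u_\lambda)}_V^p$, using $\norm{D_{\mathcal G}J_\lambda^B}_{\cL(H,H)}\leq1$ from Lemma~\ref{lem:res}. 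With mismatched arguments no such compensation is available and the key a priori estimate collapses.

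A second concrete issue is the solvability of the frozen-$\lambda$ problem. The equation $A_\lambda(\partial_t u^d_\lambda)+B_\lambda(u_\lambda)=F$ is still implicit in the rate, and $A_\lambda$ need not be injective (it is not when $A$ has flat parts), so you cannot solve for $\partial_t u^d_\lambda$; the Pardoux/Krylov--Rozovski\u\i\ variational setting treats equations already in normal form $\d u=b(\cdot,u)\,\d t+\sigma(\cdot,u)\,\d W$ and does not apply as stated, nor does the companion paper, which concerns the different structure $\d A(u)$. The paper resolves this by adding the elliptic regularization $\lambda\,\partial_t u^d_\lambda$, so that $(\lambda I+A_\lambda)^{-1}$ is Lipschitz and the problem can be rewritten in normal form and solved by (locally) Lipschitz SPDE theory. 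Two smaller slips in the same spirit: the ``classical'' It\^o formula does not apply directly to $\widehat B_\lambda(u_\lambda)$, because under {\bf B2} the map $B_\lambda$ is only G\^ateaux differentiable, which is why the paper needs Lemma~\ref{lem:ito_lam}; and the uniform $L^p(\Omega;L^\infty(0,T;V))$ bound holds for $J_\lambda^B(u_\lambda)$, not for $u_\lambda$ itself, so tightness and the limit passage must be run on the resolvent sequence. Once the approximation is set up as in the paper, the remainder of your outline (including the comparison of the approximate averaged balances with the exact limit identity and the lower semicontinuity of the terminal energy) is correct and matches the paper's argument.
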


Our second result gives a sufficient condition on the nonlinearities
in order to obtain existence (and uniqueness) of strong solutions
both in the probabilistic and analytical sense.
\begin{thm}[Well posedness in the strong probabilistic sense]
  \label{thm2}
  Under the assumptions of Subsection \emph{\ref{ssec:assumptions}},
  suppose further that 
  $A$ is strongly monotone,
  $B$ is linear self-adjoint,
  $G$ is linear in its second variable, 
  and $h_G=0$.
  Then, 
  there exists a unique triple $(u, u^d, v)$, with
  \begin{align*}
  &u \in L^p_{\cP}(\Omega; C^0([0,T]; H))\cap L^p_\cP(\Omega; L^\infty(0,T; V))\,,
  \qquad u^d \in L^2_\cP(\Omega; H^1(0,T; H))\,,\\
  &B(u) \in L^2_{\cP}(\Omega; L^2(0,T; H))\,,\\
  & v \in L^2_\cP(\Omega; L^2(0,T, H))\,,
  \end{align*}
  such that
  \begin{align*}
  & u(t) = u_0 + \int_0^t\partial_t u^d(s)\,\d s + \int_0^tG(s, u(s))\,\d  W(s)
  &&\forall\,t\in[0,T]\,,\quad \P\text{-a.s.}\,,\\
  & v(t) + B( u(t)) = F(t, u(t)) 
  &&\text{for a.e.~$t\in(0,T)$}\,,\quad \P\text{-a.s.}\,,\\
  & v(t) \in A(\partial_t  u^d(t)) 
  &&\text{for a.e.~$t\in(0,T)$}\,,\quad \P\text{-a.s.}
  \end{align*}
  Furthermore, there exists a constant $K>0$, such that,
  for every set $\{u_{0,i}\}_{i=1,2}\subset V$ of initial data,
  their respective solutions $\{(u_i, u_i^d, v_i)\}_{i=1,2}$ satisfy
  \[
  \norm{u_1-u_2}_{L^\infty(0,T; L^2(\Omega; V))}
  +\norm{u_1^d-u_2^d}_{L^2(\Omega; H^1(0,T; H))}
  \leq K\norm{u_{0,1}-u_{0,2}}_V\,.
  \]
\end{thm}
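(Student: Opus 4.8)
The plan is to prove a quantitative stability estimate for \eqref{eq'} under the extra structural hypotheses of Theorem~\ref{thm2} --- which contains pathwise uniqueness as the special case of equal initial data --- and then to upgrade the martingale solution furnished by Theorem~\ref{thm1} to a probabilistically strong one by a Yamada--Watanabe-type argument. To this end, let $(u_i,u_i^d,v_i)$, $i=1,2$, be two solution triples in the stated regularity class, relative to the same stochastic basis and the same Wiener process $W$, with initial data $u_{0,i}\in V$, and set $u:=u_1-u_2$, $u^d:=u_1^d-u_2^d$, $v:=v_1-v_2$, $u_0:=u_{0,1}-u_{0,2}$. Because $B$ is linear and self-adjoint, $G$ is linear in its second variable and $h_G=0$, the difference again solves an equation of the same doubly nonlinear type:
\begin{align*}
  u(t)&=u_0+\int_0^t\partial_t u^d(s)\,\d s+\int_0^tG(s,u(s))\,\d W(s)\,,\qquad u^d(0)=u_0\,,\\
  v(t)+B(u(t))&=F(t,u_1(t))-F(t,u_2(t))\qquad\text{for a.e.\ }t\in(0,T)\,,
\end{align*}
with $v_i(t)\in A(\partial_t u_i^d(t))$ for a.e.\ $t$ and $i=1,2$.

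Since $B$ is linear, by Assumption~B2 it is bounded from $V$ to $V^*$, hence (being self-adjoint and monotone) $\widehat B(x)=\tfrac12(Bx,x)_H$ on $V$ and $\tfrac{c_B}2\norm x_V^2\le\widehat B(x)\le C\norm x_V^2$; in particular these bounds are compatible with Assumption~B1 only for $p=2$, which is therefore the case here. Applying the It\^o formula of Proposition~\ref{prop:ito} to $t\mapsto\widehat B(u(t))$ and inserting $B(u_i)=F(\cdot,u_i)-v_i$ yields, $\P$-a.s.\ for all $t\in[0,T]$,
\begin{align*}
  \widehat B(u(t))+\int_0^t\big(v(s),\partial_t u^d(s)\big)_H\,\d s
  &=\widehat B(u_0)+\int_0^t\big(F(s,u_1(s))-F(s,u_2(s)),\partial_t u^d(s)\big)_H\,\d s\\
  &\quad+\frac12\int_0^t\operatorname{Tr}\big[G(s,u(s))G(s,u(s))^*B\big]\,\d s
  +\int_0^t\big(B(u(s)),G(s,u(s))\,\d W(s)\big)_H\,.
\end{align*}
By strong monotonicity of $A$ one has $(v(s),\partial_t u^d(s))_H\ge\alpha\norm{\partial_t u^d(s)}_H^2$ a.e.\ for some $\alpha>0$; Young's inequality and the local Lipschitz bound of Assumption~F (with $p=2$) dominate the first integral on the right by $\tfrac\alpha2\norm{\partial_t u^d(s)}_H^2+C\norm{u(s)}_V^2$; and, by Assumption~G with $h_G=0$, $\abs{\operatorname{Tr}[G(s,u(s))G(s,u(s))^*B]}\le\norm{L(s,u(s))}_{\cL^1(H,H)}\le C_G\norm{u(s)}_V^2$. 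Using $\norm u_V^2\le\tfrac2{c_B}\widehat B(u)$ and taking expectations --- the stochastic integral having zero expectation after a standard localization, since its quadratic variation is controlled by $\sup_s\norm{G(s,u(s))}_{\cL^2(U,H)}^2\int_0^T\norm{B(u(s))}_H^2\,\d s$ --- we arrive at
\[
  \E\widehat B(u(t))+\frac\alpha2\,\E\int_0^t\norm{\partial_t u^d(s)}_H^2\,\d s\le\widehat B(u_0)+C\int_0^t\E\widehat B(u(s))\,\d s\,.
\]

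Gronwall's lemma then gives $\E\widehat B(u(t))\le e^{CT}\widehat B(u_0)$ for a.e.\ $t$, whence $\norm{u_1-u_2}_{L^\infty(0,T;L^2(\Omega;V))}\le K\norm{u_0}_V$; feeding this back controls $\E\int_0^T\norm{\partial_t u^d(s)}_H^2\,\d s$, and since $u^d(t)=u_0+\int_0^t\partial_t u^d(s)\,\d s$ also $\norm{u_1^d-u_2^d}_{L^2(\Omega;H^1(0,T;H))}\le K\norm{u_0}_V$ --- this is the asserted estimate. Choosing $u_{0,1}=u_{0,2}$ forces $u_1=u_2$ $\P$-a.s.\ in $C^0([0,T];H)\cap L^\infty(0,T;V)$, then $u_1^d=u_2^d$ via the It\^o representation and finally $v_1=v_2$ from $v_i=F(\cdot,u_i)-B(u_i)$; that is, pathwise uniqueness holds. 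Since the additional hypotheses of Theorem~\ref{thm2} preserve Assumptions~A, B1, B2, F, G, Theorem~\ref{thm1} supplies a martingale solution; combining martingale existence with pathwise uniqueness, a Yamada--Watanabe-type argument --- concretely, the Gy\"ongy--Krylov characterization of convergence in probability applied to the approximating sequence constructed in Section~\ref{sec:proof1} --- produces a probabilistically strong solution $(u,u^d,v)$ on the original stochastic basis, in the stated class, and unique by pathwise uniqueness.

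The point I expect to be delicate is precisely that the difference of two solutions again solves an equation of the same doubly nonlinear form, so that Proposition~\ref{prop:ito} applies to its energy with the \emph{correct} It\^o correction $\tfrac12\operatorname{Tr}[G(\cdot,u_1-u_2)G(\cdot,u_1-u_2)^*B]$: this relies decisively on $B$ being linear self-adjoint, $G$ linear in the state, and $h_G=0$, without which that correction would not be bounded by $\norm{u_1-u_2}_V^2$ and the Gronwall estimate would fail to close. A secondary technical nuisance is the localization needed to make the martingale term have vanishing expectation, $B(u)$ being known only in $L^2_\cP(\Omega;L^2(0,T;H))$.
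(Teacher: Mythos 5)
Your proposal is correct and follows essentially the same route as the paper: a continuous-dependence estimate for the difference of two solutions obtained by applying the generalized It\^o formula of Proposition~\ref{prop:ito} to $\widehat B(u_1-u_2)$ (exploiting linearity of $B$ and $G$, $h_G=0$, hence $p=2$), strong monotonicity of $A$, the bound $\operatorname{Tr}[L(\cdot,u_1-u_2)]\leq C_G\norm{u_1-u_2}_V^2$, and Gronwall; then pathwise uniqueness plus the Gy\"ongy--Krylov criterion applied to the approximations of Section~\ref{sec:proof1} to obtain the probabilistically strong solution. Your added remarks on localization of the stochastic integral and on why the It\^o correction closes the estimate are consistent with, and slightly more explicit than, the paper's argument.
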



\section{An extended It\^o's formula}
\label{sec:ito}
In this section we prove a general version of It\^o's formula for $\widehat B$,
under minimal differentiability assumptions, see Proposition~\ref{prop:ito}. 
In particular, we recall that the second derivative of $\widehat B$
is supposed to be defined only in the sense of G\^ateaux in $V$:
hence, $\widehat B$ may even be not twice-differentiable on $H$,
and, in fact, not even well-defined on the whole space $H$.

We find such It\^o's formula very interesting on its own,
as it widely generalizes the classical result to the case where 
the second derivatives are not necessarily well-defined and continuous.

For every $\lambda>0$, we recall that 
the Moreau-Yosida regularization of $\widehat B$,
the resolvent of $B$, and
the Yosida approximation of $B$, are defined respectively as
\begin{align*}
  &\widehat B_\lambda:H\to[0,+\infty)\,, 
  &&\widehat B_\lambda(x):=\inf_{y\in H}\left\{\widehat B(y) + 
  \frac{\norm{x-y}_H^2}{2\lambda}\right\}\,, \quad x\in H\,,\\
  &J_\lambda^B:H\to D(B)\subset V\,, 
  &&J_\lambda^B(x):=(I+\lambda B)^{-1}(x)\,, \quad x\in H\,,\\
  &B_\lambda:H\to H\,, 
  &&B_\lambda(x):=\frac{x - J_\lambda^B(x)}{\lambda}\,, \quad x\in H\,,
\end{align*}
where $I:H \to H$ is the identity in $H$. 
It is well-known that $J_\lambda^B$ is non-expansive on $H$ and that 
$B_\lambda$ is $1/\lambda$-Lipschitz-continuous on $H$.

First of all, we prove some technical properties of the resolvent.
\begin{lem}
  \label{lem:res}
  For every $\lambda>0$, the resolvent $J_\lambda^B$ uniquely extends to 
  a $\frac1{p-1}$-H\"older-continuous 
  G\^ateaux-differentiable operator $J_\lambda^B:V^*\to V$,
  such that 
   $D_{\mathcal G}J_\lambda^B\in C^0(H; \cL_w(H,H))$. 
  Moreover, as $\lambda\searrow0$ it holds that
  \begin{align*}
    J_\lambda^B(x)\to x \qquad&\text{in } H &&\forall\,x\in H\,,\\
    J_\lambda^B(x)\wto x \qquad&\text{in } V &&\forall\,x\in V\,,\\
    D_{\mathcal G}J_\lambda^B(x)\to I \qquad&\text{in } \cL_s(H,H)
     &&\forall\,x\in H\,. 
  \end{align*}
  In particular, there exists a constant $M>0$, independent of $\lambda$, such that
  \[
  \norm{J_\lambda(x)}_V\leq M\left(1+\norm{x}_V\right) \qquad\forall\,x\in V\,.
  \]
\end{lem}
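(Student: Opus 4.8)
The plan is to realize $J_\lambda^B$ as the inverse of the monotone operator $I+\lambda B:V\to V^*$ and to deduce the stated properties from the structural properties of $B$ supplied by Assumptions B1--B2.

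\emph{Step 1 (extension and H\"older bound).} First I would check that, seen on $V$, the operator $B$ is single-valued, everywhere defined, hemicontinuous (it is G\^ateaux-differentiable by B2), and that the $p$-strong monotonicity inequality of B1, a priori valid on $D(B)$, extends to all of $V$ by density of $D(B)$ and demicontinuity of $B:V\to V^*$. Then $\mathcal A_\lambda:=I+\lambda B:V\to V^*$ is hemicontinuous, strongly monotone ($\langle\mathcal A_\lambda x_1-\mathcal A_\lambda x_2,x_1-x_2\rangle\ge\lambda c_B\|x_1-x_2\|_V^p$) and coercive (here $p\ge2$ and $B(0)=0$, the latter since $\widehat B\ge0=\widehat B(0)$). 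By Browder--Minty, $\mathcal A_\lambda$ is a bijection $V\to V^*$; its inverse restricts to the usual resolvent on $H$ (for $x\in H$ one has $\lambda B(y)=x-y\in H$, so $y\in D(B)$), hence is the required extension, unique by density of $H$ in $V^*$. Testing $\mathcal A_\lambda(J_\lambda^Bx_1)-\mathcal A_\lambda(J_\lambda^Bx_2)=x_1-x_2$ against $J_\lambda^Bx_1-J_\lambda^Bx_2$ gives $\lambda c_B\|J_\lambda^Bx_1-J_\lambda^Bx_2\|_V^p\le\|x_1-x_2\|_{V^*}\|J_\lambda^Bx_1-J_\lambda^Bx_2\|_V$, i.e.\ the $\tfrac1{p-1}$-H\"older estimate.

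\emph{Step 2 (uniform bound and limits as $\lambda\searrow0$).} For $x\in V$ and $y:=J_\lambda^Bx$ I would compare $y$ with $x$: from $B(y)=(x-y)/\lambda$ we get $\langle B(y),y-x\rangle=-\lambda^{-1}\|y-x\|_H^2\le0$, so $c_B\|y-x\|_V^p\le\langle B(y)-B(x),y-x\rangle\le\|B(x)\|_{V^*}\|y-x\|_V$; since $\|B(x)\|_{V^*}\le C(1+\|x\|_V^{p-1})$ (integrate the B2-bound along the segment $0\to x$), this yields $\|y-x\|_V\le C(1+\|x\|_V)$ with $C$ independent of $\lambda$, hence the stated bound. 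The convergence $J_\lambda^Bx\to x$ in $H$ for $x\in H$ is the classical resolvent convergence, applicable because $\overline{D(B)}=H$ (as $V\subseteq D(\widehat B)$ is dense in $H$); for $x\in V$, the uniform $V$-bound, reflexivity of $V$, and the strong $H$-limit force $J_\lambda^Bx\wto x$ in $V$.

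\emph{Step 3 (differentiability and continuity of the differential).} Fix $x$ and a direction $k$; put $y:=J_\lambda^Bx$, $y_t:=J_\lambda^B(x+tk)$, $z_t:=(y_t-y)/t$. Since $D_{\mathcal G}B\in C^0(V;\cL_w(V,V^*))$, the fundamental theorem of calculus gives $B(y_t)-B(y)=M_t(y_t-y)$ with $M_t:=\int_0^1D_{\mathcal G}B(y+s(y_t-y))\,\d s\ge0$, hence $z_t+\lambda M_tz_t=k$. Pairing with $z_t$ bounds $z_t$ and $M_tz_t=(k-z_t)/\lambda$ in $H$ (and, when $p=2$, even bounds $z_t$ in $V$ via the extra coercivity $\langle M_tv,v\rangle\ge c_B\|v\|_V^2$). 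By Step 1, $y_t\to y$ in $V$, so $M_t\to D_{\mathcal G}B(y)$ in $\cL_w(V,V^*)$; extracting $z_{t_n}\wto z$ and $M_{t_n}z_{t_n}\wto\zeta$ in $H$, passing to the limit in the equation, and identifying $\zeta$ by a Minty-type argument — using $M_t\ge0$, the compact embedding $V\embed H$ to upgrade weak $H$-convergence of $z_t$ to strong, and the symmetry of $D_{\mathcal G}B$ — shows that the limit solves the linear equation $z+\lambda D_{\mathcal G}B(y)z=k$, read in the natural energy space of that form (which is $V$ when $p=2$). Since $I+\lambda D_{\mathcal G}B(y)\ge I$ on $H$, this equation has a unique solution, so the whole family $z_t$ converges in $H$ and $k\mapsto z$ defines the G\^ateaux differential $D_{\mathcal G}J_\lambda^B(x)$, linear and, by nonexpansiveness of $J_\lambda^B$ on $H$, of operator norm at most $1$. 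For the continuity $D_{\mathcal G}J_\lambda^B\in C^0(H;\cL_w(H,H))$ I would take $x_n\to x$ in $H$, use $J_\lambda^Bx_n\to J_\lambda^Bx$ in $V$ (Step 1) hence $D_{\mathcal G}B(J_\lambda^Bx_n)\to D_{\mathcal G}B(J_\lambda^Bx)$ in $\cL_w(V,V^*)$, and pass to the limit in $(I+\lambda D_{\mathcal G}B(J_\lambda^Bx_n))D_{\mathcal G}J_\lambda^B(x_n)k=k$ via the same Minty device and the uniform $H$-bound. Finally, for $x,k\in H$, $z_\lambda:=D_{\mathcal G}J_\lambda^B(x)k$ satisfies $\|z_\lambda\|_H\le\|k\|_H$ and $\lambda\langle D_{\mathcal G}B(J_\lambda^Bx)z_\lambda,z_\lambda\rangle=(k,z_\lambda)_H-\|z_\lambda\|_H^2$, and arguing as for the classical resolvent convergence gives $z_\lambda\to k$ in $H$, i.e.\ $D_{\mathcal G}J_\lambda^B(x)\to I$ in $\cL_s(H,H)$.

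\emph{Expected main obstacle.} The hard part will be the identification of $\lim_{t\to0}M_tz_t$ in Step 3. The linearized operator $I+\lambda D_{\mathcal G}B(y)$ is only $H$-coercive, never $V$-coercive, so for $p>2$ the difference quotients $z_t$ need not be bounded in $V$ and $D_{\mathcal G}B$ is continuous only into $\cL_w(V,V^*)$; closing the Minty argument then requires a tight interplay of the compact embedding $V\embed H$, the symmetry of the weakly continuous second differential, and the algebraic relation $M_tz_t=(k-z_t)/\lambda$, together with the correct choice of the energy space hosting the limiting linearized equation. The case $p=2$ is much easier, since there all the relevant weak $V$-convergences become strong and $J_\lambda^B$ is genuinely G\^ateaux-differentiable as a map $V^*\to V$.
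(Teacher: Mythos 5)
Your Steps 1 and 2 are sound and essentially parallel the paper (Step 1 is the same Browder--Minty argument; your uniform $V$-bound via $\langle B(J_\lambda^Bx),J_\lambda^Bx-x\rangle\le 0$ and the $V^*$-bound on $B(x)$ is a legitimate alternative to the paper's test with $B(J_\lambda^B(x))$). The genuine gap is in Step 3, and it is exactly the obstacle you flag without resolving: for $p>2$ the passage to the limit in $z_t+\lambda M_tz_t=k$ cannot be closed as sketched. The difference quotients $z_t$ are bounded only in $H$, the operators $M_t$ converge only in $\cL_w(V,V^*)$, and $D_{\mathcal G}B(y)$ is defined only on $V$ with values in $V^*$; consequently the limiting object $D_{\mathcal G}B(y)z$ is not even defined for a weak $H$-limit $z$, so the ``limit equation'' whose unique solvability you invoke has no natural formulation (there is no evident energy space for the form $\langle D_{\mathcal G}B(y)v,v\rangle$ in which $z_t$ is bounded when $p>2$, since the $p$-strong monotonicity of $B$ gives only nonnegativity, not $V$-coercivity, of $D_{\mathcal G}B$). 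The same problem recurs verbatim in your proposed proofs of $D_{\mathcal G}J_\lambda^B\in C^0(H;\cL_w(H,H))$ and of $D_{\mathcal G}J_\lambda^B(x)\to I$ in $\cL_s(H,H)$, since both again require identifying a weak $H$-limit through the linearized equation. As the lemma is stated for all $p\ge2$, and the differentiability of $J_\lambda^B$ (hence of $B_\lambda$) is the key output used later, this is a missing core step rather than a technicality; your argument is complete only in the case $p=2$.

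For comparison, the paper avoids this issue in two ways. First, it obtains differentiability of $J_\lambda^B$ not from difference quotients but by inverting the linearization: it shows $T_\lambda(x)=I+\lambda D_{\mathcal G}B(x):V\to V^*$ is an isomorphism and sets $D_{\mathcal G}J_\lambda^B=T_\lambda(J_\lambda^B(\cdot))^{-1}\in\cL(V^*,V)$, so the derivative lands in $V$ by construction and the identity $D_{\mathcal G}J_\lambda^B(y)h+\lambda D_{\mathcal G}B(J_\lambda^B(y))D_{\mathcal G}J_\lambda^B(y)h=h$ makes sense. Second, for the two continuity/convergence statements about the differential it never passes to the limit in the linearized equation at all: it uses the a priori bound $\norm{D_{\mathcal G}J_\lambda^B(\cdot)h}_H\le\norm{h}_H$ together with a double-sequence (Moore--Osgood type) argument on the scalar quantities $\bigl(\tfrac{J_\lambda^B(x_j+h/i)-J_\lambda^B(x_j)}{1/i},k\bigr)_H$, exploiting only the nonexpansiveness of $J_\lambda^B$ on $H$ and its pointwise convergence, to identify the weak limits as $D_{\mathcal G}J_\lambda^B(x)h$ (resp.\ $h$), with a final limsup-of-norms step upgrading to strong convergence in the $\lambda\searrow0$ case. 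If you want to salvage your route, you would need either this kind of identification that bypasses $D_{\mathcal G}B$ entirely, or an argument placing the difference quotients (or the limit) in $V$, which your current sketch does not supply.
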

\begin{proof}
  {\sc Step 1.} Let $\lambda>0$ be fixed. The operator
  \[
  I + \lambda B:V\to V^*
  \]
  is maximal monotone and coercive, hence surjective.
  Its inverse $(I + \lambda B)^{-1}:V^*\to V$ is well-defined,
  and coincides with $J_\lambda^B$ on $H$. 
  In the sequel, we will use the same symbol $J_\lambda^B$
  to denote $(I + \lambda B)^{-1}:V^*\to V$.
  Note that actually $J_\lambda^B:V^*\to V$
  is $\frac1{p-1}$-H\"older-continuous by the strong monotonicity of $B$.
  Indeed, for every $x_1,x_2\in V^*$,
  it is immediate to see that 
  \[
  \norm{J_\lambda^B(x_1)-J_\lambda^B(x_2)}_H^2
  +\lambda c_B\norm{J_\lambda^B(x_1)-J_\lambda^B(x_2)}_V^p
  \leq\ip{x_1-x_2}{J_\lambda^B(x_1)-J_\lambda^B(x_2)}_{V^*,V}\,,
  \]
  from which it follows that 
  \[
  \norm{J_\lambda^B(x_1)-J_\lambda^B(x_2)}_V\leq
  \left(\frac{1}{\lambda c_B}\right)^{\frac1{p-1}}\norm{x_1-x_2}_{V^*}^{\frac1{p-1}}\,.
  \]
  Hence, $(I + \lambda B)^{-1}:V^*\to V$ is H\"older-continuous, 
  and the extension of $J_\lambda^B$ to $V^*$ is unique.
  Now, by {\bf B1--B2} we know that $I+\lambda B:V\to V^*$ is G\^ateaux-differentiable
 with differential given by 
  \[
  T_\lambda(x):=I + \lambda D_{\mathcal G}B(x) \in \cL(V,V^*)\,, \qquad x\in V\,.
  \]
  It follows that for every $x\in V$,
  the linear continuous operator $T_\lambda(x):V\to V^*$ is monotone 
  (hence maximal monotone),
  injective, and coercive on $V$ (due to the strong monotonicity of $B$): 
  this implies that $T_\lambda(x)$ is an isomorphism for all $x\in V$.
  Consequently, as we have already proved that 
  $I+\lambda B:V\to V^*$ is invertible with H\"older-continuous inverse,
  we deduce that $J_\lambda^B:V^*\to V$ is G\^ateaux-differentiable with 
  \[
  D_{\mathcal G}J_\lambda^B(x)=
  T_\lambda(J_\lambda^B(x))^{-1}\in\cL(V^*,V)\,, \qquad x\in V^*\,.
  \]
  Clearly, this implies in particular that $J_\lambda^B:H\to H$ is G\^ateaux-differentiable,
  and
  \beq\label{eq_res}
  D_{\mathcal G}J_\lambda^B(y)h + \lambda 
  D_{\mathcal G}B(J_\lambda^B(y))D_{\mathcal G}J_\lambda^B(y)h 
  = h \qquad\forall\,h,y\in H\,.
  \eeq
  As a by-product, we deduce also that $B_\lambda:H\to H$ is G\^ateaux-differentiable with
  \[
  D_{\mathcal G}B_\lambda(y)=D_{\mathcal G}B(J_\lambda^B(y))D_{\mathcal G}J_\lambda^B(y)
  \qquad\forall\,y\in H\,.
  \]
  Let us show that actually  $D_{\mathcal G}J_\lambda^B\in C^0(H; \cL_w(H,H))$. 
  To this end, let $(x_j)_j\subset H$ and $x\in H$ such that 
  $x_j\to x$ in $H$ as $j\to\infty$. For what we have just proved, 
  it holds that $J_\lambda^B(x_j)\to J_\lambda^B(x)$ in $V$.
 Taking $y=x_j$ and $h\in H$ in \eqref{eq_res},
 and testing by $D_{\mathcal G}J_\lambda^B(x_j)h$ yields,
 by the monotonicity of $B$ and the Young inequality,
 \[
 \frac12\norm{D_{\mathcal G}J_\lambda^B(x_j)h}_H^2
  \leq\frac12\norm{h}_H^2 \qquad\forall\,j\in\enne\,.
 \] 
  Hence, there exists $I_\lambda(x,h)\in H$ such that, as $j\to\infty$,
  \[
  D_{\mathcal G}J_\lambda^B(x_j)h \wto I_\lambda(x,h) \qquad\text{in } H\,,\quad\forall\,h\in H\,.
  \]
  Let now $k\in H$ be arbitrary, and
  consider the double sequence 
  \[
  a_{j,i}:=\left(\frac{J_\lambda^B(x_j+ h/i)-J_\lambda^B(x_j)}{1/i}, k\right)_H\,, \qquad (j,i)\in\enne^2\,.
  \]
  Since $J_\lambda^B$ is non-expansive on $H$, the sequence $\{a_{j,i}\}_{j,i}$
  is uniformly bounded in $(j,i)$. Hence, 
  there is $a_\lambda\in\erre$ such that, possibly on a 
  non-relabelled subsequence of $(j,i)$, it holds
  \[
  \lim_{j,i\to\infty}a_{j,i}= a \,.
  \]
  Now, note also that by continuity and G\^ateaux-differentiability of $J_\lambda^B$ we have 
  \[
  \lim_{j\to\infty}a_{j,i}=\left(\frac{J_\lambda^B(x+ h/i)-J_\lambda^B(x)}{1/i}, k\right)_H
  \qquad\forall\,i\in\enne\,,
  \]
  and
  \[
  \lim_{i\to\infty}a_{j,i}=\left(D_{\mathcal G}J_\lambda^B(x_j)h,k\right)_H \qquad\forall\,j\in\enne\,.
  \]
  Consequently, since the partial limits are finite and the double sequence converges also 
  jointly, it holds that 
  \[
  a=\lim_{j,i\to\infty}a_{j,i} = \lim_{j\to\infty}\lim_{i\to\infty}a_{j,i} 
  =\lim_{i\to\infty}\lim_{j\to\infty}a_{j,i}\,.
  \]
  Putting this information together, we deduce that 
  \[
  \lim_{j\to\infty}\left(D_{\mathcal G}J_\lambda^B(x_j)h,k\right)_H = 
  \left(D_{\mathcal G}J_\lambda^B(x)h,k\right)_H = a \qquad\forall\,h,k\in H\,.
  \]
  Since the limit $a$ is independent of the specific subsequence, this 
  argument can be replicated on all subsequences of $(j,i)$, and the convergences 
  holds actually along the entire sequence. By the arbitrariness of $k\in H$,
  it follows that 
  $I_\lambda(x,h)=D_{\mathcal G}J_\lambda^B(x)h$ for all $h\in H$.
   This shows that $D_{\mathcal G}J_\lambda^B\in C^0(H; \cL_w(H,H))$, as required.\\
  {\sc Step 2.}
  Let us show the behaviour as $\lambda\searrow0$: let $x\in H$ be fixed. 
  First of all, recalling that 
  \[
  J_\lambda^B(x) + \lambda B(J_\lambda^B(x)) = x\,,
  \]
  testing by $J_\lambda^B(x)$ and using the strong monotonicity of
  $B$, the fact that $0 \in B(0)$, 
  and the Young inequality we get
  \beq\label{aux_res1}
  \frac12\norm{J_\lambda^B(x)}_H^2 + \lambda c_B\norm{J_\lambda^B(x)}_V^p
  \leq \frac12\norm{x}_H^2\,.
  \eeq
  It follows that $(J_\lambda^B(x))_\lambda$ is relatively weakly compact  in $H$.
  Moreover, by assumption {\bf B2} and \eqref{aux_res1} we have that,
  for a positive constant $M$ independent of $\lambda$,
  \[
  \lambda\norm{B(J_\lambda^B(x))}_{V^*}\leq 
  M\lambda\left(1+\norm{J_\lambda^B(x)}_V^{p-1}\right)
  \leq M\lambda + M\lambda(2c_B\lambda)^{-\frac{p-1}{p}}\norm{x}_H^{2\frac{p-1}{p}} \to 0\,.
  \]
  It follows then that $J_\lambda^B(x)\wto x$ in $H$, hence also,
  again by a classical $\limsup$-argument, that 
  $J_\lambda^B(x)\to x$ in $H$.
  Secondly, taking $h\in H$ and $y=x$
  in \eqref{eq_res}, testing 
  by $D_{\mathcal G}J_\lambda^B(x)h$, using the monotonicity of $B$
  and the Young inequality  yield
  \beq\label{aux_res2}
  \frac12\norm{D_{\mathcal G}J_\lambda^B(x)h}_H^2
  \leq\frac12\norm{h}_H^2\,.
  \eeq
  Hence, there exists $\tilde I(x,h)\in H$ such that, as $\lambda\searrow0$,
  \[
  D_{\mathcal G}J_\lambda^B(x)h \wto \tilde I(x,h) \qquad\forall\,h\in H\,.
  \]
  We show that $\tilde I(x,h)=h$ using again 
  an argument with double sequences. Let $k\in H$ be arbitrary
  and consider 
  \[
  b_{j,i}:=\left(\frac{J_{\lambda_j}^B(x+ h/i)
  -J_{\lambda_j}^B(x)}{1/i}, k\right)_H\,, \qquad (j,i)\in\enne^2\,,
  \]
  where $(\lambda_j)_j$ is an arbitrary infinitesimal real sequence.
  Since $J_\lambda^B$ is non-expansive on $H$, the sequence $\{b_{j,i}\}_{j,i}$
  is uniformly bounded in $(j,i)$, and 
  there is $b\in\erre$ such that, possibly on a 
  non-relabelled subsequence of $(j,i)$,
  \[
  \lim_{j,i\to\infty}b_{j,i}= b \,.
  \]
  As before, by the convergence of $J_\lambda$ as $\lambda\searrow0$
  and the G\^ateaux-differentiability of $J_\lambda^B$ we have 
  \[
  \lim_{j\to\infty}b_{j,i}=\left(h, k\right)_H
  \qquad\forall\,i\in\enne\,,
  \]
  and
  \[
  \lim_{i\to\infty}b_{j,i}=\left(D_{\mathcal G}J_{\lambda_j}^B(x)h,k\right)_H \qquad\forall\,j\in\enne\,.
  \]
  Consequently, since the partial limits are finite and the double sequence converges also 
  jointly, the double limits can be computed in any order, so that it holds that 
  \[
  \lim_{j\to\infty}\left(D_{\mathcal G}J_{\lambda_j}^B(x)h,k\right)_H = 
  \left(h,k\right)_H \qquad\forall\,h,k\in H\,.
  \]
  Since the limit is independent of the specific subsequence, this 
  argument can be replicated on all subsequences of $(j,i)$, and the convergences 
  holds actually along the entire sequence. By the arbitrariness of $k\in H$,
  it follows that 
  $\tilde I(x,h)=h$ for all $h\in H$.
  Furthermore, from \eqref{aux_res2} we also have 
  \[
  \limsup_{\lambda\searrow0}\norm{D_{\mathcal G}J_\lambda^B(x)h}_H^2
  \leq\norm{h}_H^2\,,
  \]
  hence actually it holds that $D_{\mathcal G}J_\lambda^B(x)h\to h$ in $H$.
  This shows that $D_{\mathcal G}J_\lambda\to I$ in $\cL_s(H,H)$, as required.\\
  {\sc Step 3.}
  Lastly, let us suppose that $x\in V$: from the relation
  \[
    J_\lambda^B(x) + \lambda B(J_\lambda^B(x)) = x\,,
  \]
  testing by $B(J_\lambda^B(x))$ and using {\bf B1--B2} together with
  the Young inequality we get 
  \begin{align*}
  c_B\norm{J_\lambda^B(x)}_V^p + \lambda\norm{B(J_\lambda^B(x))}_H^2
  &=\left(B(J_\lambda^B(x)), x\right)_H\leq
  C_B\norm{x}_V\left(1 + \norm{J_\lambda^B(x)}_V^{p-1}\right)\\
  &\leq C_B\norm{x}_V + 
  \frac{1}{p}C_B^pc_B^{1-p}\norm{x}_V^{p} +
  \frac{p-1}pc_B\norm{J_\lambda^B(x)}^p\,,
  \end{align*}
  from which
  \beq
  \label{aux_res3}
  \frac{c_B}p\norm{J_\lambda^B(x)}_V^p +  \lambda\norm{B(J_\lambda^B(x))}_H^2
  \leq C_B\norm{x}_V + \frac{1}{p}C_B^pc_B^{1-p}\norm{x}_V^{p}\,.
  \eeq
  It follows that $J_\lambda^B(x)\wto x$ in $V$ as $\lambda\searrow0$, and the proof is concluded.
\end{proof}

Lemma~\ref{lem:res} implies in particular that the Yosida approximation 
$B_\lambda:H\to H$ is G\^ateaux-differentiable, but not necessarily in 
the sense of Fr\'echet.
In the next lemma we show that it is possible to write It\^o's formula 
for $\widehat B_\lambda$, even if its second derivative is not 
well-defined everywhere in the sense of Fr\'echet.
\begin{lem}
  \label{lem:ito_lam}
   Let the processes $v$, $f$, and $C$ satisfy  
  \begin{align*}
  v\in L^2_\cP(\Omega; C^0([0,T]; H))\,, \qquad
  f \in L^2_\cP(\Omega; L^2(0,T; H))\,, \qquad
  C\in L^2_\cP(\Omega; L^2(0,T; \cL^2(U,H)))\,,
  \end{align*}
  and
  \[
  v(t) +\int_0^tf(s)\,\d s = v(0) + \int_0^tC(s)\,\d W(s) \qquad\forall\,t\in[0,T]\,,\quad\P\text{-a.s.}
  \]
  Then, for all $\lambda>0$ it holds that
  \begin{align*}
  \widehat B_\lambda(v(t)) + \int_0^t(f(s), B_\lambda(v(s)))_H\,\d s &= 
  \widehat B_\lambda(v(0)) 
  + \frac12\int_0^t\operatorname{Tr}\left[
  C^*(s)D_{\mathcal G}B_\lambda(v(s))C(s)\right]\,\d s\\
  &+\int_0^t(B_\lambda(v(s)), C(s)\,\d W(s))_H \qquad
  \forall\,t\in[0,T]\,,\quad\P\text{-a.s.}
  \end{align*}
\end{lem}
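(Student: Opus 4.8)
The plan is to reduce the statement to the classical finite-dimensional It\^o formula by projecting the process onto finite-dimensional subspaces of $H$. The only obstruction to applying It\^o's formula in $H$ directly is that $\widehat B_\lambda$ is, a priori, \emph{not} twice Fr\'echet-differentiable: its gradient $B_\lambda$ is Lipschitz and merely G\^ateaux-differentiable. This obstruction disappears on finite-dimensional subspaces, precisely because there the weak and the norm operator topologies coincide, so that the weak-operator continuity of $D_{\mathcal G}J_\lambda^B$ from Lemma~\ref{lem:res} upgrades to a genuine continuous second derivative.

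\emph{Step 1: finite-dimensional reduction.} Fix a complete orthonormal system $(e_k)_k$ of $H$, set $H_n:=\mathrm{span}\{e_1,\dots,e_n\}$ and let $P_n:H\to H_n$ be the orthogonal projection. Then $v_n:=P_nv$, $f_n:=P_nf$, $C_n:=P_nC$ retain the progressive measurability and integrability of $v,f,C$, and, since $P_n$ is bounded and linear (hence commutes with the Bochner and stochastic integrals), $v_n$ is an $H_n$-valued It\^o process with $v_n(t)+\int_0^tf_n(s)\,\d s=v_n(0)+\int_0^tC_n(s)\,\d W(s)$ for all $t$, $\P$-a.s.

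\emph{Step 2: $\widehat B_\lambda$ is $C^2$ on $H_n$.} Writing $B_\lambda=\lambda^{-1}(I-J_\lambda^B)$, Lemma~\ref{lem:res} gives that $B_\lambda:H\to H$ is G\^ateaux-differentiable with $D_{\mathcal G}B_\lambda=\lambda^{-1}(I-D_{\mathcal G}J_\lambda^B)\in C^0(H;\cL_w(H,H))$ and $\norm{D_{\mathcal G}B_\lambda(x)}_{\cL(H,H)}\le 2/\lambda$ for all $x\in H$. For $x,h\in H_n$ one computes $\frac{\d}{\d t}\widehat B_\lambda(x+th)\big|_{t=0}=(B_\lambda(x),h)_H$, so $g_n:=\widehat B_\lambda|_{H_n}$ is differentiable with $\nabla g_n(x)=P_nB_\lambda(x)$; differentiating once more, $\nabla g_n$ is G\^ateaux-differentiable on $H_n$ with derivative $h\mapsto P_nD_{\mathcal G}B_\lambda(x)h$, whose matrix entries $(D_{\mathcal G}B_\lambda(\cdot)e_a,e_b)_H$, $a,b\le n$, depend continuously on $x$ by the weak-operator continuity of $D_{\mathcal G}B_\lambda$. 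Since $\cL(H_n,H_n)\cong\erre^{n\times n}$ is finite-dimensional, this says $\nabla g_n\in C^1(H_n)$, hence $g_n\in C^2(H_n)$ with $\nabla^2g_n(x)=P_nD_{\mathcal G}B_\lambda(x)P_n$ on $H_n$.

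\emph{Step 3: finite-dimensional It\^o and passage to the limit.} Applying the classical finite-dimensional It\^o formula to $g_n\in C^2(H_n)$ along the process $v_n$, and using that $P_n=P_n^*=P_n^2$, we get, for all $t\in[0,T]$, $\P$-a.s.,
\begin{align*}
\widehat B_\lambda(v_n(t))+\int_0^t(f(s),P_nB_\lambda(v_n(s)))_H\,\d s
&=\widehat B_\lambda(v_n(0))+\frac12\int_0^t\operatorname{Tr}[C^*(s)P_nD_{\mathcal G}B_\lambda(v_n(s))P_nC(s)]\,\d s\\
&\quad+\int_0^t(P_nB_\lambda(v_n(s)),C(s)\,\d W(s))_H.
\end{align*}
Fix $\omega$ in the full-measure set where $v\in C^0([0,T];H)$, $f\in L^2(0,T;H)$ and $C\in L^2(0,T;\cL^2(U,H))$, and set $R:=\sup_t\norm{v(t)}_H$. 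Since the path $v$ has compact range and $P_n\to I$ strongly with $\norm{P_n}\le1$, we have $v_n\to v$ in $C^0([0,T];H)$; as $\widehat B_\lambda$ is $R/\lambda$-Lipschitz on the ball of radius $R$ (recall $B_\lambda(0)=0$ and $B_\lambda$ is $1/\lambda$-Lipschitz) and $B_\lambda$ is continuous, it follows that $\widehat B_\lambda(v_n(\cdot))\to\widehat B_\lambda(v(\cdot))$ uniformly on $[0,T]$ and $P_nB_\lambda(v_n(s))\to B_\lambda(v(s))$ in $H$ with $\norm{P_nB_\lambda(v_n(s))}_H\le R/\lambda$; dominated convergence then handles the Lebesgue integral $\int_0^t(f,P_nB_\lambda(v_n))_H\,\d s$. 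For the trace term, for each $s$ and each $k$ one checks, using $P_nC(s)u_k\to C(s)u_k$ in $H$, the bound $\norm{D_{\mathcal G}B_\lambda}_{\cL(H,H)}\le2/\lambda$ and the weak-operator continuity of $D_{\mathcal G}B_\lambda$, that $(D_{\mathcal G}B_\lambda(v_n(s))P_nC(s)u_k,P_nC(s)u_k)_H\to(D_{\mathcal G}B_\lambda(v(s))C(s)u_k,C(s)u_k)_H$; summing over $k$ (dominant $\frac2\lambda\norm{C(s)u_k}_H^2$, summable to $\frac2\lambda\norm{C(s)}_{\cL^2(U,H)}^2$) and integrating (dominant $\frac1\lambda\norm{C(s)}_{\cL^2(U,H)}^2\in L^1(0,T)$) yields convergence of the trace integral. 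Finally $\int_0^T\norm{C(s)}_{\cL^2(U,H)}^2\norm{P_nB_\lambda(v_n(s))-B_\lambda(v(s))}_H^2\,\d s\to0$ by dominated convergence, so the standard stochastic-integral convergence criterion ($\sup_t\norm{\int_0^t\Phi_n\,\d W}\to0$ in probability whenever $\int_0^T\norm{\Phi_n}^2\,\d s\to0$ in probability) gives $\sup_t|\int_0^t(P_nB_\lambda(v_n),C\,\d W)_H-\int_0^t(B_\lambda(v),C\,\d W)_H|\to0$ in probability; along a subsequence realising this convergence almost surely, letting $n\to\infty$ in the displayed identity yields the assertion for all $t\in[0,T]$, $\P$-a.s.

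\emph{Main obstacle.} The decisive point is Step~2: converting the merely G\^ateaux-differentiable, Lipschitz map $B_\lambda$ into a $C^1$ map --- equivalently, $\widehat B_\lambda$ into a $C^2$ function --- so that a bona fide It\^o formula is available. This is exactly where finite-dimensionality is used, together with the weak-operator continuity provided by Lemma~\ref{lem:res}; once Step~2 is in place, the work in Steps~3--4 is routine dominated-convergence bookkeeping, all dominants being controlled path-wise by $R/\lambda$ and $\norm{C}_{L^2(0,T;\cL^2(U,H))}$.
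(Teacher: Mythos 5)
Your proposal is correct and follows essentially the same route as the paper: project onto finite-dimensional subspaces $H_n$, use the weak-operator continuity of $D_{\mathcal G}B_\lambda$ (from Lemma~\ref{lem:res}) together with finite-dimensionality to upgrade $\widehat B_\lambda|_{H_n}$ to a genuine $C^2$ function, apply the finite-dimensional It\^o formula, and pass to the limit in $n$. The only differences are cosmetic (the paper picks the orthonormal basis inside $V$ and passes to the limit via Vitali and Burkholder--Davis--Gundy in $L^2(\Omega)$, while you argue pathwise with dominated convergence and convergence in probability along a subsequence), and both variants are sound.
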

\begin{proof}
  Let $\{e_k\}_{k\in\enne}$ be a complete orthonormal 
  system of $H$ included in $V$ and set 
  \[
  H_n:=\operatorname{span}\{e_0\,,\ldots\,,e_{n-1}\}\,, \quad n\in\enne\,, \qquad
  H_\infty:=\bigcup_{n\in\enne} H_n\,.
  \]
  Clearly, $H_n$ is a closed linear subspace of $H$ for all $n\in\enne$,
  and $H_\infty$ is dense in $H$.
  For every $n\in\enne$, it is well-defined the 
  orthogonal projection $P_n:H\to H_n$ on $H_n$, with respect to the scalar 
  product of $H$. Clearly, $P_n$ is linear, $1$-Lipschitz continuous, and
  \[
  P_nx\to x \quad\text{in $H$} \qquad\forall\,x\in\text{$H$}\,.
  \]
  Setting now $v_n:=P_nv$, $f_n:=P_nf$, and $C_n:=P_nC$, we have that,
  for every $n\in\enne$,
  \begin{equation}
  v_n(t) +\int_0^tf_n(s)\,\d s = v_n(0) + \int_0^tC_n(s)\,\d W(s) 
  \quad\text{in } H_n\,,\qquad\forall\,t\in[0,T]\,,\quad\P\text{-a.s.}\label{sette}
  \end{equation}
  Now, it is immediate to see that the restriction 
  $\widehat B_{\lambda|_{H_n}}\in C^1(H_n)$ and 
  its differential is given by 
  \[
  D(\widehat B_{\lambda|_{H_n}})=P_n\circ B_{\lambda|_{H_n}}:H_n\to H_n\,.
  \]
  Furthermore, from Lemma~\ref{lem:res} we have that $B_\lambda:H\to H$
  is G\^ateaux-differentiable, which yields directly that 
  $P_n\circ B_{\lambda|_{H_n}}:H_n\to H_n$ is G\^ateaux-diifferentiable in turn with 
  \[
  D_{\mathcal G}(P_n\circ B_{\lambda|_{H_n}})(x)h =
  P_nD_{\mathcal G}B_\lambda(x)h\,, \qquad x\in H_n\,, \quad h\in H_n\,.
  \]
  Let us show that actually $D^2_{\mathcal G}\widehat B_\lambda \in C^0(H_n; \cL(H_n,H_n))$.
  To this end, let $\{x_j\}_j\subset H_n$ and $x\in H_n$ such that
  $x_j\to x$ in $H_n$ as $j\to\infty$. Then we have 
  \begin{align*}
    \norm{D^2_{\mathcal G}\widehat B_\lambda(x_j)-
    D^2_{\mathcal G}\widehat B_\lambda(x)}_{\cL(H_n,H_n)}
    &=
    \sup_{\norm{h}_{H_n}\leq1}\left\{
    \norm{D^2_{\mathcal G}\widehat B_\lambda(x_j)h-
    D^2_{\mathcal G}\widehat B_\lambda(x)h}_{H_n}
    \right\}\\
    &=\sup_{\norm{h}_{H_n}\leq1}
    \sup_{\norm{k}_{H_n}\leq1}
    \left\{
    \left((D_{\mathcal G}B_\lambda(x_j)-
    D_{\mathcal G} B_\lambda(x))h, k\right)_{H_n}
    \right\}\,.
  \end{align*}
  Now, for any such arbitrary $h,k\in H_n$, we can write
  \[
  h=\sum_{i=1}^na^h_i e_i\,, \qquad 
  k=\sum_{i=1}^na^k_i e_i\,,
  \]
  where
  \[
  a_i^h:=(h,e_i)_H\,, \qquad a_i^k:=(k,e_i)_H\,, \qquad i=1,\ldots,n\,.
  \]
  Clearly, $\norm{h}_{H_n}\leq1$ and $\norm{k}_{H_n}\leq 1$ if and only if 
  \[
  \sum_{i=1}^n|a_i^h|^2\leq 1\,, \qquad \sum_{i=1}^n|a_i^k|^2\leq 1\,.
  \]
  Consequently, setting $B_n$ as the closed unit ball in $\erre^n$,
  we have $a^h:=(a_1^h,\ldots,a_n^h)\in B_n$ and 
  $a^k:=(a_1^k,\ldots,a_n^k)\in B_n$. We deduce that 
  \begin{align*}
    \norm{D^2_{\mathcal G}\widehat B_\lambda(x_j)-
    D^2_{\mathcal G}\widehat B_\lambda(x)}_{\cL(H_n,H_n)}
    &=
    \sup_{a^h,a^k\in B_n}
    \left\{\sum_{i,l=1}^n a_i^ha_l^k
    \left((D_{\mathcal G}B_\lambda(x_j)-
    D_{\mathcal G} B_\lambda(x))e_i, e_l\right)_{H_n}
    \right\}\\
    &\leq\sum_{i,l=1}^n\left|
    \left((D_{\mathcal G} B_\lambda(x_j)-
    D_{\mathcal G}B_\lambda(x))e_i, e_l\right)_{H}\right|\,,
  \end{align*}
  where the right-hand side converges to $0$ when $j\to\infty$
  because $D_{\mathcal G} B_\lambda\in C^0(H; \cL_w(H,H))$ by 
  Lemma~\ref{lem:res}.
  Hence, we have proved that $\widehat B_{\lambda|_{H_n}}
   \in C^2(H_n)$ for every $n\in\enne$, and
  the finite dimensional It\^o formula yields then
  \begin{align*}
  \widehat B_\lambda(v_n(t)) + \int_0^t(f_n(s), B_\lambda(v_n(s)))_H\,\d s &= 
  \widehat B_\lambda(v_n(0)) 
  + \frac12\int_0^t\operatorname{Tr}
  \left[C_n^*(s)D_{\mathcal G}B_\lambda(v_n(s))C_n(s)\right]\,\d s\\
  &+\int_0^t(B_\lambda(v_n(s)), C_n(s)\,\d W(s))_H \qquad
  \forall\,t\in[0,T]\,,\quad\P\text{-a.s.}
  \end{align*}
  We want to pass now to the limit as $n\to\infty$. To this end, 
  note that by the properties of $P_n$ we have that 
  \begin{align*}
    v_n\to v \qquad&\text{in } L^2_\cP(\Omega; L^\ell(0,T; H)) \quad\forall\,\ell\geq1\,,\\
    v_n(t)\to v(t) \qquad&\text{in } L^2(\Omega,\cF_t; H)\,, \quad\forall\,t\in[0,T]\,,\\
    f_n\to f \qquad&\text{in } L^2_\cP(\Omega; L^2(0,T; H))\,,\\
    C_n\to C \qquad&\text{in } L^2_\cP(\Omega; L^2(0,T; \cL^2(U,H)))\,.
  \end{align*}
  Moreover, by the Lipschitz-continuity of $B_\lambda$,
  the quadratic growth of $\widehat B_\lambda$, and
  the Vitali Convergence Theorem ensure that
  \begin{align*}
   B_\lambda(v_n)\to B_\lambda(v) \qquad&\text{in } 
   L^2_\cP(\Omega; L^\ell(0,T; H)) \quad\forall\,\ell\geq1\,,\\
   \widehat B_\lambda(v_n(t))\to \widehat B_\lambda(v(t))
   \qquad&\text{in } L^1(\Omega,\cF_t)\,, \quad\forall\,t\in[0,T]\,.
  \end{align*}
  Furthermore, Lemma~\ref{lem:res} and the definition of Yosida approximation
  imply that 
  \[
    D_{\mathcal G}B_\lambda \in C^0(H; \cL_w(H,H))\,,
  \]
  so that by the Dominated Convergence Theorem we also infer that 
  \[
  \operatorname{Tr}
  \left[C_n^*D_{\mathcal G}B_\lambda(v_n)C_n\right]
  \to\operatorname{Tr}
  \left[C^*D_{\mathcal G}B_\lambda(v)C\right] \qquad\text{in } L^1_{\cP}(\Omega; L^1(0,T))\,.
  \]
  Eventually, noting that by the dominated convergence theorem
  \[
  B_\lambda(v_n(s))C_n\to B_\lambda(v_n)C \qquad\text{in } 
  L^2_\cP(\Omega; L^2(0,T; \cL^2(U,\erre)))\,,
  \]
  the Burkholder-Davis-Gundy inequality yields
  \[
  \int_0^\cdot(B_\lambda(v_n(s)), C_n(s)\,\d W(s))_H \to 
  \int_0^\cdot(B_\lambda(v(s)), C(s)\,\d W(s))_H \qquad\text{in } L^2_\cP(\Omega; C^0([0,T]))\,.
  \]
  Taking this information into account, 
we can let $n\to\infty$ in \eqref{sette}, and deduce that It\^o's formula holds for 
  a suitable $\d\P\otimes\d t$-version of $v$, denoted by the same symbol 
  for brevity of notation.
\end{proof}

We are now ready to prove the most general version of It\^o formula for $\widehat B$.
Let us stress again that this is not obvious at all, as the second derivative 
of $\widehat B$ is not everywhere defined and is intended only in the sense of G\^ateaux.
\begin{prop}[It\^o formula for $\widehat B$]
  \label{prop:ito}
  Let the processes $v$ and $f$ satisfy  
  \begin{align*}
  &v\in L^2_\cP(\Omega; C^0([0,T]; H))\,, \qquad
  \widehat B(v(t)) \in L^1(\Omega,\cF_t) \quad\forall\,t\in[0,T]\,,\\
  &v\in D(B)\quad\text{a.e.~in } \Omega\times(0,T)\,, \qquad
  B(v)\in L^2_\cP(\Omega; L^2(0,T; H))\,,\\
  &f \in L^2_\cP(\Omega; L^2(0,T; H))\,, 
  \end{align*}
  and
  \[
  v(t) +\int_0^tf(s)\,\d s = v(0) + \int_0^tG(s,v(s))\,\d W(s) \qquad\forall\,t\in[0,T]\,,\quad\P\text{-a.s.}
  \]
  Then, it holds that
  \begin{align*}
  \widehat B(v(t)) + \int_0^t(f(s), B(v(s)))_H\,\d s &= 
  \widehat B(v(0)) + \frac12\int_0^t\operatorname{Tr}
  \left[L(s,v(s))\right]\,\d s\\
  &+\int_0^t\big(B(v(s)), G(s,v(s))\,\d W(s)\big)_H \qquad
  \forall\,t\in[0,T]\,,\quad\P\text{-a.s.}
  \end{align*}
\end{prop}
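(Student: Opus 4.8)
The plan is to apply Lemma~\ref{lem:ito_lam} to $v$ with the choice $C(\cdot):=G(\cdot,v(\cdot))$ and then to let $\lambda\searrow0$. The Lipschitz bound in Assumption~\textbf{G} together with $\norm{G(\cdot,0)}_{\cL^2(U,H)}\leq C_G$ gives $\norm{G(s,v(s))}_{\cL^2(U,H)}\leq C(1+\norm{v(s)}_H)$, so, since $v\in L^2_\cP(\Omega;C^0([0,T];H))$, the choice $C=G(\cdot,v)$ is admissible, and Lemma~\ref{lem:ito_lam} yields, for every $\lambda>0$,
\begin{align*}
  \widehat B_\lambda(v(t)) + \int_0^t\big(f(s),B_\lambda(v(s))\big)_H\,\d s
  &=\widehat B_\lambda(v(0)) + \frac12\int_0^t\operatorname{Tr}\big[G(s,v(s))^*D_{\mathcal G}B_\lambda(v(s))G(s,v(s))\big]\,\d s\\
  &\quad+\int_0^t\big(B_\lambda(v(s)),G(s,v(s))\,\d W(s)\big)_H
\end{align*}
for all $t\in[0,T]$, $\P$-a.s. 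It then remains to pass to the limit in the four terms.

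Besides Lemma~\ref{lem:res}, I would use the standard Moreau-Yosida facts that for $x\in D(B)$ one has $\widehat B_\lambda(x)\nearrow\widehat B(x)$, $B_\lambda(x)=B(J_\lambda^B(x))\to B(x)$ in $H$ with $\norm{B_\lambda(x)}_H\leq\norm{B(x)}_H$ and $\norm{x-J_\lambda^B(x)}_H=\lambda\norm{B_\lambda(x)}_H\leq\lambda\norm{B(x)}_H$, together with $J_\lambda^B(x)\to x$ \emph{in $V$}, the last following by testing $J_\lambda^B(x)+\lambda B_\lambda(x)=x$ by $J_\lambda^B(x)-x$ and using the strong monotonicity of $B$ and $\big(B_\lambda(x),J_\lambda^B(x)-x\big)_H=-\lambda^{-1}\norm{J_\lambda^B(x)-x}_H^2\leq0$, which gives $c_B\norm{J_\lambda^B(x)-x}_V^p\leq-\big(B(x),J_\lambda^B(x)-x\big)_H\to0$. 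As $v\in D(B)$ $\d\P\otimes\d t$-a.e., these hold with $x=v(s)$. The first term then converges by monotone convergence, $\widehat B_\lambda(v(t))\to\widehat B(v(t))$ in $L^1(\Omega)$ for every $t$ (using $\widehat B(v(t))\in L^1(\Omega,\cF_t)$), likewise at $t=0$; the second converges pathwise by dominated convergence with majorant $\norm{f(s)}_H\norm{B(v(s))}_H\in L^1(\Omega\times(0,T))$; and the stochastic integral converges in probability, uniformly on $[0,T]$, because its $\cL^2(U,\erre)$-valued integrand $G(s,v(s))^*B_\lambda(v(s))$ satisfies, for a.e.\ $\omega$,
\[
  \int_0^T\norm{G(s,v(s))^*\big(B_\lambda(v(s))-B(v(s))\big)}_U^2\,\d s\leq C\big(1+\norm{v}_{C^0([0,T];H)}^2\big)\int_0^T\norm{B_\lambda(v(s))-B(v(s))}_H^2\,\d s\to0
\]
by dominated convergence (majorant $4\norm{B(v(s))}_H^2\in L^1(0,T)$).

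The trace term is the delicate one. By Lemma~\ref{lem:res}, $D_{\mathcal G}B_\lambda(v(s))=D_{\mathcal G}B(J_\lambda^B(v(s)))\,D_{\mathcal G}J_\lambda^B(v(s))$, with $D_{\mathcal G}J_\lambda^B(v(s))\to I$ in $\cL_s(H,H)$ and $\norm{D_{\mathcal G}J_\lambda^B(v(s))}_{\cL(H,H)}\leq1$; moreover $J_\lambda^B(v(s))\to v(s)$ in $V$, and by the Lipschitz bound $\norm{G(s,v(s))-G(s,J_\lambda^B(v(s)))}_{\cL^2(U,H)}\leq C_G\lambda\norm{B(v(s))}_H$. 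Writing the trace as $\norm{D_{\mathcal G}B_\lambda(v(s))^{1/2}G(s,v(s))}_{\cL^2(U,H)}^2$ ($D_{\mathcal G}B_\lambda(v(s))$ being bounded, self-adjoint and non-negative on $H$, so that $\norm{D_{\mathcal G}B_\lambda(v(s))^{1/2}}_{\cL(H,H)}\leq\lambda^{-1/2}$), the extra factor $\lambda$ above absorbs this $\lambda^{-1/2}$ and lets one replace $G(s,v(s))$ by $G(s,J_\lambda^B(v(s)))$ up to a vanishing error; the arguments of $G$ and of $D_{\mathcal G}B$ then coincide, and combining $D_{\mathcal G}J_\lambda^B(v(s))\to I$, $J_\lambda^B(v(s))\to v(s)$ in $V$ and the assumed continuity $L(s,\cdot)\in C^0(V;\cL^1(H,H))$ one identifies the $\d\P\otimes\d t$-a.e.\ pointwise limit of the integrand as $\operatorname{Tr}[L(s,v(s))]$. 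For the passage under the time integral I would use, as a $\lambda$-uniform majorant, the elementary inequality $\big(D_{\mathcal G}B_\lambda(x)g,g\big)_H\leq\ip{D_{\mathcal G}B(J_\lambda^B(x))g}{g}_{V^*,V}$ for $g\in V$ — obtained by expanding $\ip{D_{\mathcal G}B(J_\lambda^B(x))g}{g}_{V^*,V}$ along $g=D_{\mathcal G}J_\lambda^B(x)g+\lambda D_{\mathcal G}B_\lambda(x)g$ and discarding non-negative terms — together with $\norm{L(s,y)}_{\cL^1(H,H)}\leq h_G(s)+C_G\norm{y}_V^p$ and $\norm{J_\lambda^B(v(s))}_V\leq M(1+\norm{v(s)}_V)$; this bounds the trace term by $h_G(s)+C\big(1+\norm{v(s)}_V^p+\norm{B(v(s))}_H^2\big)$ for $\lambda\leq1$. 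That this is integrable follows because the strong monotonicity of $B$ and $B(0)=0$ give $c_B\norm{v(s)}_V^p\leq\big(B(v(s)),v(s)\big)_H$, whence $\int_0^T\norm{v(s)}_V^p\,\d s\leq c_B^{-1}T^{1/2}\norm{v}_{C^0([0,T];H)}\big(\int_0^T\norm{B(v(s))}_H^2\,\d s\big)^{1/2}\in L^1(\Omega)$, a product of two $L^2(\Omega)$ functions; dominated convergence (pathwise) then gives convergence of the trace integral, uniformly on $[0,T]$.

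Passing to the limit along a sequence $\lambda\searrow0$, all four terms converge for each fixed $t$, so the asserted identity holds $\P$-a.s.\ for that $t$; since its right-hand side is $\P$-a.s.\ continuous in $t$, so is $\widehat B(v(\cdot))$, and the identity holds for all $t\in[0,T]$, $\P$-a.s. The main obstacle is the trace term: since $D_{\mathcal G}B$ is only a G\^ateaux differential, controlled merely as an operator from $V$ to $V^*$ and never on $H$, it cannot be freely composed with the $\cL^1(H,H)$- and $\cL_s(H,H)$-type convergences available for $G$, $GG^*$ and $D_{\mathcal G}J_\lambda^B$; it is precisely the structural hypothesis of Assumption~\textbf{G}, that $GG^*D_{\mathcal G}B(\cdot)$ extends to an $\cL^1(H,H)$-valued map continuous in $V$, that makes the composition tractable, and reconciling the two base points $v(s)$ and $J_\lambda^B(v(s))$ — appearing respectively in $G$ (through $C=G(\cdot,v)$) and in $D_{\mathcal G}B$ (through the chain rule for $B_\lambda$) — is where the argument is genuinely subtle.
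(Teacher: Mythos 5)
Your proposal is correct in substance but follows a genuinely different route from the paper. The paper does not apply Lemma~\ref{lem:ito_lam} to $v$ itself: precisely because Assumption {\bf G} controls $L(\cdot,x)=G(\cdot,x)G(\cdot,x)^*D_{\mathcal G}B(x)$ only when $G$ and $D_{\mathcal G}B$ share the \emph{same} base point, the paper introduces an auxiliary It\^o process $v_\lambda$ solving the modified equation with diffusion coefficient $G(\cdot,J_\lambda^B(v_\lambda))$, so that the trace term in Lemma~\ref{lem:ito_lam} is exactly $\operatorname{Tr}[L(\cdot,J_\lambda^B(v_\lambda))D_{\mathcal G}J_\lambda^B(v_\lambda)]$; it then proves $v_\lambda\to v$ by Gronwall and Burkholder--Davis--Gundy, gets $B_\lambda(v_\lambda)\wto B(v)$ and $J_\lambda^B(v_\lambda)\to v$ in $L^p(\Omega;L^p(0,T;V))$ from strong monotonicity, and handles $\widehat B_\lambda(v_\lambda)\to\widehat B(v)$ through a convex-duality ($\widehat B^*$) argument, before concluding by Vitali. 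You instead keep $v$ fixed and repair the base-point mismatch perturbatively, trading the estimate $\norm{v-J_\lambda^B(v)}_H\leq\lambda\norm{B(v)}_H$ against $\norm{D_{\mathcal G}B_\lambda(v)}_{\cL(H,H)}^{1/2}\leq\lambda^{-1/2}$ so that the replacement of $G(\cdot,v)$ by $G(\cdot,J_\lambda^B(v))$ inside the trace costs only $O(\lambda^{1/2})\norm{B(v)}_H$ times a quantity bounded by Assumption {\bf G}; after that your identification and domination of the trace term mirror the paper's. What your route buys is real: no auxiliary SDE, all limits taken pathwise, and since $\widehat B_\lambda(v(t))\nearrow\widehat B(v(t))$ for every $\omega$ and $t$, the ``for all $t$, $\P$-a.s.'' form of the identity comes out cleanly, whereas the paper must work with a $\d\P\otimes\d t$-version. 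What it costs is one extra structural fact that you assert but should justify: the square-root step (equivalently the generalized Cauchy--Schwarz inequality for the form $(D_{\mathcal G}B_\lambda(x)\cdot,\cdot)_H$, and likewise your ``elementary inequality'' involving $\ip{D_{\mathcal G}B(J_\lambda^B(x))g}{g}_{V^*,V}$) requires $D_{\mathcal G}B_\lambda(x)$ to be symmetric as well as non-negative on $H$; without symmetry the naive bound only gives an $O(1)$, not $o(1)$, error. Symmetry does hold --- it is the second G\^ateaux derivative of the convex function $\widehat B_\lambda$, and the continuity $D_{\mathcal G}B_\lambda\in C^0(H;\cL_w(H,H))$ from Lemma~\ref{lem:res} (resp.\ {\bf B2} for $D_{\mathcal G}B$ on $V$) makes the classical mixed-derivative argument work --- but this is nowhere stated in the paper and is a genuine ingredient of your proof. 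Your remaining auxiliary claims (strong $V$-convergence $J_\lambda^B(x)\to x$ for $x\in D(B)$, $\norm{B_\lambda(x)}_H\leq\norm{B(x)}_H$, the pathwise integrability of $\norm{v}_V^p$ via $c_B\norm{v}_V^p\leq(B(v),v)_H$) are correctly argued and suffice.
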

\begin{proof}
For every $\lambda>0$, there exists a unique $v_\lambda\in L^2_\cP(\Omega; C^0([0,T]; H))$
such that 
\[
  v_\lambda(t) +\int_0^tf(s)\,\d s 
  = v(0) + \int_0^tG(s,J_\lambda^B
  (v_\lambda(s)))\,\d W(s) \qquad\forall\,t\in[0,T]\,,\quad\P\text{-a.s.}
\]
Hence, it holds that 
\[
  (v-v_\lambda)(t)
  = 
  \int_0^t \left(G(s,v(s)) - G(s,J_\lambda^B(v_\lambda(s)))\right)\,\d W(s) 
  \qquad\forall\,t\in[0,T]\,,\quad\P\text{-a.s.}
\]
It\^o's isometry, the Lipschitz-continuity of $G$,
and the properties of $J_\lambda^B$ and $B_\lambda$ yield then, for every $t\in[0,T]$,
\begin{align*}
  \E\norm{(v-v_\lambda)(t)}_H^2
  &=\E\int_0^t\norm{G(s,v(s)) - G(s,J_\lambda^B(v(s)))}_{\cL^2(U,H)}^2\,\d s\\
  &\leq C_G\int_0^t\E\norm{(v-J_\lambda^B(v_\lambda))(s)}_H^2\,\d s\\
  &\leq 2C_G\int_0^t\E\norm{(v - J_\lambda^B(v))(s)}_H^2
  +2C_G\int_0^t\E\norm{(J_\lambda^B(v)-J_\lambda^B(v_\lambda))(s)}_H^2\\
  &\leq 2C_G\lambda^2\int_0^t\E\norm{B_\lambda(v(s))}_H^2
  +2C_G\int_0^t\E\norm{(v-v_\lambda)(s)}_H^2\\
  &\leq 2C_G\lambda^2 \norm{B(v)}_{L^2_\cP(\Omega; L^2(0,T; H))}^2
  +2C_G\int_0^t\E\norm{(v-v_\lambda)(s)}_H^2\,.
\end{align*}
The Gronwall lemma implies that there exists $M>0$, 
independent of $\lambda$, such that 
\[
  \norm{v_\lambda-v}_{C^0([0,T]; L^2(\Omega; H))} \leq M\lambda\,.
\]
In particular, it holds that $v_\lambda\to v$ in $C^0([0,T]; L^2(\Omega; H))$.
A classical argument involving the Burkholder-David-Gundy 
inequality allows then to show that 
\beq\label{conv1_ito}
  v_\lambda \to v \qquad\text{in } L^2_\cP(\Omega; C^0([0,T]; H))\,.
\eeq
Moreover, since $B_\lambda$ is $\frac1\lambda$-Lipschitz-continuous on $H$,
by the estimate just proved we also have that 
\begin{align*}
  \norm{B_\lambda(v_\lambda)}_{L^2_\cP(\Omega; L^2(0,T; H))}&\leq
  \norm{B_\lambda(v)}_{L^2_\cP(\Omega; L^2(0,T; H))} + 
  \norm{B_\lambda(v_\lambda) - B_\lambda(v)}_{L^2_\cP(\Omega; L^2(0,T; H))}\\
  &\leq \norm{B(v)}_{L^2_\cP(\Omega; L^2(0,T; H))} + 
  \frac1\lambda\norm{v_\lambda - v}_{L^2_\cP(\Omega; L^2(0,T; H))}\\
  &\leq \norm{B(v)}_{L^2_\cP(\Omega; L^2(0,T; H))} + M\,.
\end{align*}
It follows that $(B_\lambda(v_\lambda))_\lambda$ is uniformly bounded in 
$L^2_\cP(\Omega; L^2(0,T; H))$:
as we already know that $v_\lambda\to v$ in $L^2_\cP(\Omega;L^2(0,T; H))$,
by strong-weak closure of $B$ we also have that 
\beq\label{conv2_ito}
  B_\lambda(v_\lambda) \wto B(v) \qquad\text{in } L^2_\cP(\Omega; L^2(0,T; H))\,.
\eeq
As a consequence, by the strong monotonicity of $B$ in {\bf B1}, we get
\[
  c_B\norm{J_\lambda^B(v_\lambda)-v}_{L^p_\cP(\Omega; L^p(0,T; V))}^p
  \leq \E\int_0^T\left(B_\lambda(v_\lambda(s)) - B(v(s)), 
  J_\lambda^B(v_\lambda(s))-v(s)\right)_H\,\d s\to 0\,,
\]
so that 
\beq\label{conv3_ito}
  J_\lambda^B(v_\lambda) \to v \qquad\text{in } L^p_\cP(\Omega; L^p(0,T; V))\,.
\eeq
Now, by Lemma~\ref{lem:ito_lam}, for every $\lambda>0$ we have,
\begin{align}
  \nonumber
  &\widehat B_\lambda(v_\lambda(t)) + 
  \int_0^t(f(s), B_\lambda(v_\lambda(s)))_H\,\d s\\
   \nonumber&= 
  \widehat B_\lambda(v(0)) 
  + \frac12\int_0^t\operatorname{Tr}
  \left[G^*(s,J_\lambda^B(v_\lambda(s)))
  D_{\mathcal G}B_\lambda(v_\lambda(s))G(s,J_\lambda^B(v_\lambda(s)))\right]\,\d s\\
  &\label{ito_lam}\qquad
   +\int_0^t(B_\lambda(v_\lambda(s)), G(s,J_\lambda^B(v_\lambda(s)))\,\d W(s))_H
   \qquad\forall\,t\in[0,T]\,,\quad\P\text{-a.s.}
\end{align}
By definition of $\widehat B_\lambda$ and $B_\lambda$, we have that 
\begin{align*}
  \widehat B_\lambda(v_\lambda) &= \widehat B(J_\lambda(v_\lambda))
  +\frac\lambda2\norm{B_\lambda(v_\lambda)}_H^2
  =\left(B_\lambda(v_\lambda), J_\lambda(v_\lambda)\right)_H
  -\widehat B^*(B_\lambda(v_\lambda)) +\frac\lambda2\norm{B_\lambda(v_\lambda)}_H^2\,,
\end{align*}
from which, recalling \eqref{conv2_ito} and that $\widehat B^*$
is weakly lower semicontinuous on $H$,
\begin{align*}
  &\limsup_{\lambda\searrow0}
  \E\int_0^T\widehat B_\lambda(v_\lambda(s))\,\d s\\
  &\leq\lim_{\lambda\searrow0}
  \E\int_0^T\left(B_\lambda(v_\lambda(s)),J_\lambda(v_\lambda(s))\right)_H\,\d s
  -\liminf_{\lambda\searrow0} 
  \E\int_0^T\widehat B^*(B_\lambda(v_\lambda(s)))\,\d s\\
  &\leq\E\int_0^T\left(B(v(s)),v(s))\right)_H\,\d s
  -\E\int_0^T\widehat B^*(B(v(s)))\,\d s = \E\int_0^T\widehat B(v(s))\,\d s\,.
\end{align*}
As the $\liminf$ inequality is immediate due to
the lower semicontinuity of $\widehat B$, we infer that 
\beq\label{conv4_ito}
  \widehat B_\lambda(v_\lambda) \to \widehat B(v) \qquad\text{in } L^1_\cP(\Omega; L^1(0,T))\,.
\eeq
Furthermore, by the Lipschitz continuity of $G$ we get 
\begin{align*}
  &\norm{B_\lambda(v_\lambda)G(\cdot, J_\lambda^B(v_\lambda)) 
  - B(v)G(\cdot,v)}_{\cL^2(U,\erre)}\\
  &\leq\norm{B_\lambda(v_\lambda)
  (G(\cdot, J_\lambda^B(v_\lambda)) -G(\cdot,v))}_{\cL^2(U,\erre)}
  +\norm{(B_\lambda(v_\lambda) - B(v))G(\cdot,v)}_{\cL^2(U,\erre)}\\
  &\leq C_G\norm{B_\lambda(v_\lambda)}_H\norm{J_\lambda^B(v_\lambda)-v}_H
  +\norm{(B_\lambda(v_\lambda) - B(v))G(\cdot,v)}_{\cL^2(U,\erre)}\,,
\end{align*}
so that from \eqref{conv1_ito}--\eqref{conv2_ito} and the Dominated Convergence Theorem 
we deduce that 
\[
  B_\lambda(v_\lambda)G(\cdot,J_\lambda^B(v_\lambda)) \to B(v)G(\cdot,v) \qquad\text{in }
  L^2_{\cP}(\Omega; L^2(0,T; \cL^2(U,\erre)))\,.
\]
It follows, thanks to the Burkholder-Davis-Gundy inequality, that
\begin{align}
  \nonumber
  &\int_0^\cdot(B_\lambda(v_\lambda(s)), G(s,J_\lambda^B(v_\lambda(s)))\,\d W(s))_H\\
  &\qquad\label{conv5_ito}\to
  \int_0^\cdot(B(v(s)), G(s,v(s))\,\d W(s))_H \quad\text{in } L^2_\cP(\Omega; C^0([0,T]))\,.
\end{align}
It remains to show that we can let $\lambda\searrow0$ in the trace term. To this end, 
note that by Lemma~\ref{lem:res} and the definition of $L$ in {\bf G} we have 
\begin{align*}
  \operatorname{Tr}
  \left[G^*(\cdot,J_\lambda^B(v_\lambda))
  D_{\mathcal G}B_\lambda(v_\lambda)G(\cdot,J_\lambda^B(v_\lambda))\right]
  &=\operatorname{Tr}
  \left[G(\cdot,J_\lambda^B(v_\lambda))G^*(\cdot,J_\lambda^B(v_\lambda))
  D_{\mathcal G}B_\lambda(v_\lambda)\right]\\
  &=\operatorname{Tr}
  \left[L(\cdot,J_\lambda^B(v_\lambda))
  D_{\mathcal G}J^B_\lambda(v_\lambda)\right]\,,
\end{align*}
where, by the strong convergence \eqref{conv3_ito}, the continuity of $L$ in
assumption {\bf G}, and again Lemma~\ref{lem:res},
\begin{align*}
  L(\cdot,J_\lambda^B(v_\lambda)) \to L(\cdot,v) \qquad&\text{in } \cL^1(H,H)\,,
  \quad\text{a.e.~in } \Omega\times(0,T)\,,\\
  D_{\mathcal G}J^B_\lambda(v_\lambda) \to I \qquad&\text{in } \cL_s(H,H)\,,
  \quad\text{a.e.~in } \Omega\times(0,T)\,.
\end{align*}
It follows then that 
\[
  \operatorname{Tr}
  \left[L(\cdot,J_\lambda^B(v_\lambda))
  D_{\mathcal G}J^B_\lambda(v_\lambda)\right] \to 
  \operatorname{Tr}
  \left[L(\cdot,v)\right] \qquad\text{a.e.~in } \Omega\times(0,T)\,.
\]
Moreover, by assumption {\bf G} we have that 
\begin{align*}
  |\operatorname{Tr}
  \left[L(\cdot,J_\lambda^B(v_\lambda))
  D_{\mathcal G}J^B_\lambda(v_\lambda)\right]|
  &\leq\norm{L(\cdot,J_\lambda^B(v_\lambda))}_{\cL^1(H,H)}
  \norm{D_{\mathcal G}J^B_\lambda(v_\lambda)}_{\cL(H,H)}\\
  &\leq
  \norm{L(\cdot,J_\lambda^B(v_\lambda))}_{\cL^1(H,H)}\\
  &\leq h_G(\cdot) + C_G\norm{J_\lambda^B(v_\lambda)}_V^p\,.
\end{align*}
From \eqref{conv3_ito} the right-hand converges in $L^1_\cP(\Omega; L^1(0,T))$,
hence it is uniformly integrable, and so is by comparison the left-hand side.
Putting all this information together, the
Vitali Convergence Theorem yields then
\beq\label{conv6_ito}
 \operatorname{Tr}
  \left[G^*(\cdot,J_\lambda^B(v_\lambda))
  D_{\mathcal G}B_\lambda(v_\lambda)G(\cdot,J_\lambda^B(v_\lambda))\right]  \to 
  \operatorname{Tr}
  \left[L(\cdot,v)\right] \qquad\text{in } L^1_{\cP}(\Omega; L^1(0,T))\,.
\eeq
Letting then $\lambda\searrow0$ in \eqref{ito_lam}
and using the convergences \eqref{conv1_ito}--\eqref{conv3_ito}
and \eqref{conv4_ito}--\eqref{conv6_ito} we conclude.
\end{proof}


\section{Existence of martingale solutions}
\label{sec:proof1}
This section is devoted to the proof of Theorem~\ref{thm1}. The
proof is based on a Yosida-type approximation on the operators $A$
and $B$. The passage to the limit hinges then on a
lower-semicontinuity arguments, which in turn makes uses of the It\^o
formula for $\haz B$ from Proposition~\ref{prop:ito}. For the sake of
clarity, we subdivide the argument in subsequent steps in the coming
subsections.

\subsection{Approximation}
For every $\lambda>0$ let $A_\lambda,B_\lambda:H\to H$
be the Yosida approximations of $A$ and $B$, respectively, which 
which we recall to be a maximal monotone $\frac1\lambda$-Lipschitz-continuuos 
operators on $H$. The respective resolvents are denoted by 
$J_\lambda^A,J_\lambda^B:H\to H$.
Let us also recall that 
$R_\lambda\in\cL(H,V)$ is a regularizing operator 
converging to the identity in $\cL_s(V,V)$.

The approximated reads as follows:
\begin{align}
  \nonumber
  &\text{find $u_{\lambda}\in\,\mathcal I^{2,2}(H, H)$ such that}\\
  \label{app_prob}
  &\begin{cases}
     \lambda \partial_t u^d_{\lambda} + A_{\lambda}(\partial_t u^d_{\lambda}) 
     + B_\lambda(u_{\lambda}) = F(\cdot, R_\lambda J_\lambda^B(u_{\lambda}))\,,\\
    u_{\lambda}^d(0)=u_{0}\,,\\
    u^s_{\lambda}=G(\cdot,J_\lambda^B(u_{\lambda}))\,.
  \end{cases}
\end{align}
More precisely, 
this means that we look for an
$H$-valued continuous process $u_{\lambda}$ such that
\beq\label{app_prob1}
  u_{\lambda}=u_{0} 
  +\int_0^\cdot\partial_t u_{\lambda}^d(s)\,\d s 
  + \int_0^\cdot G(s, J_\lambda^B(u_{ \lambda}(s)))\,\d W(s)\,,
\eeq
where $u_{\lambda}^d \in L^2_{\cP}(\Omega; H^1(0,T; H))$ is such that 
\beq\label{app_prob2}
  \lambda \partial_t u^d_{\lambda} +
  A_{\lambda}(\partial_t u^d_{\lambda}) + 
  B_\lambda(u_{\lambda}) = F(\cdot, R_\lambda J_\lambda^B(u_{\lambda}))
  \qquad\text{in } L^2_{\cP}(\Omega; L^2(0,T; H))\,.
\eeq

By definition of $A_\lambda$, it is readily seen that
$\lambda I + A_{\lambda}:H\to H$ is Lipschitz-continuous and strongly monotone:
hence, the inverse operator $(\lambda I +A_\lambda)^{-1}:H\to H$ is well-defined and
Lipschitz-continuous as well.
It follows that we can equivalently rewrite the differential relation above as
\[
  \partial_t u^d_{\lambda} = 
  (\lambda I + A_{\lambda})^{-1}(F(\cdot, R_\lambda 
  J_\lambda^B(u_{\lambda})) - B_\lambda(u_\lambda))\,.
\] 
Then, the approximated problem \eqref{app_prob1}--\eqref{app_prob2} can be written in
so-called normal form as
\[
  \d u_{\lambda} 
  = (\lambda I +A_{\lambda})^{-1}(F(\cdot, R_\lambda 
  J_\lambda^B(u_{\lambda})) - B_\lambda(u_\lambda))\,\d t
  + G(\cdot,J_\lambda^B(u_{\lambda}))\,\d W\,, \qquad
  u_{\lambda}(0)=u_{0}\,.
\]
Now, it is clear that by assumption {\bf G} and the fact that $J_\lambda^B:H\to H$
is Lipschitz-continuous, the operator $G(\cdot,J_\lambda^B):[0,T]\times H\to\cL^2(U,H)$ is
Lipschitz-continuous and linearly bounded in $H$, uniformly on $[0,T]$.
Moreover, recall also that 
$(\lambda I +A_{\lambda})^{-1}$ and $B_\lambda$ are Lipschitz-continuous on $H$, 
and that $R_\lambda:H\to V$ is linear continuous
(so in particular Lipschitz-continuous). Hence, bearing in mind that 
by Lemma~\ref{lem:res} we have 
\[
  \norm{J_\lambda^B(x_1)-J_\lambda^B(x_2)}_V^p\leq \frac1{\lambda c_B}\norm{x_1-x_2}_H^2
  \qquad\forall\,x_1,x_2\in H\,,
\]
we deduce that the operator 
\[
  S_\lambda(\cdot,x):=(\lambda I +A_{\lambda})^{-1}(F(\cdot, R_\lambda 
  J_\lambda^B(x)) - B_\lambda(x))\,, \qquad x\in H\,,
\]
is locally Lipschitz-continuous and linearly bounded on $H$.
Indeed, by assumption {\bf F} we have
\begin{align*}
  \norm{S_\lambda(\cdot, x_1)-S_\lambda(\cdot, x_2)}_H&\leq 
  M_\lambda\left(1 + \norm{J_\lambda^B(x_1)}_V^{\frac{p-2}2}
  +\norm{J_\lambda(x_2)}_V^{\frac{p-2}2}\right)\norm{x_1-x_2}_H\\
  &\leq 
  M'_\lambda\left(1 + \norm{x_1}_H^{\frac{p-2}p}
  +\norm{x_2}_H^{\frac{p-2}p}\right)\norm{x_1-x_2}_H \qquad\forall\,x_1,x_2\in H
\end{align*}
and
\[
  \norm{S_\lambda(\cdot, x)}_H\leq M_\lambda'\left(1+  |h_F(\cdot)|  +\norm{x}_H\right)
  \qquad\forall\,x\in H\,,
\]
for certain constants $M_\lambda, M_\lambda'>0$.
Since $\frac{p-2}{p}\leq1$, we can apply the 
classical existence-uniqueness results for SPDEs 
with locally Lipschitz coefficients (see \cite{LiuRo}), and infer that the
approximated problem \eqref{app_prob1}--\eqref{app_prob2}
admits a unique global solution 
\[
  u_\lambda \in \mathcal I^{2,2}(H,H)\,.
\]

\subsection{Uniform estimates}
We prove here some estimates on the approximated solutions
uniformly in $\lambda$. 
To this end, from \eqref{app_prob1}, we can write It\^o's 
formula for $\widehat B_\lambda$ by using Lemma~\ref{lem:ito_lam}, getting 
\begin{align*}
  \widehat B_\lambda(u_{\lambda}(t)) &= 
  \widehat B_\lambda(u_0)
  +\int_0^t
  \left(\partial_tu_{\lambda}^d(s), B_\lambda(u_{\lambda}(s)\right)_H\,\d s\\
  &+\frac12\int_0^t
  \operatorname{Tr}\left[
  G(s,J_\lambda^B(u_{\lambda}(s)))^*
  D_{\mathcal G}B_\lambda(u_{\lambda}(s))G(s,J_\lambda^B(u_{\lambda}(s)))
  \right]\,\d s\\
  &+\int_0^t
  \left(B_\lambda(u_{\lambda}(s)),G(s,J_\lambda^B(u_{\lambda}(s)))\,\d W(s)\right)_H
  \qquad\forall\,t\in[0,T]\,,\quad\P\text{-a.s.}
\end{align*}
Taking equation \eqref{app_prob2} into account, this leads to 
\begin{align}
  \nonumber
  &\widehat B_\lambda(u_{\lambda}(t)) 
  +\lambda\int_0^t\norm{\partial_tu_\lambda^d(s)}_H^2\,\d s
  +\int_0^t\left(A_\lambda(\partial_t u_\lambda^d(s)), \partial_t u_\lambda^d(s)\right)_H\,\d s\\
  &\nonumber= 
  \widehat B_\lambda(u_0)
  +\int_0^t
  \left(F(s,R_\lambda J_\lambda^B(u_\lambda(s))), \partial_tu_{\lambda}^d(s)\right)_H\,\d s
  +\int_0^t
  \left(B_\lambda(u_{\lambda}(s)),G(s,J_\lambda^B(u_{\lambda}(s)))\,\d W(s)\right)_H\\
  &\label{ito_lambda}\quad+\frac12\int_0^t
  \operatorname{Tr}\left[
  G(s,J_\lambda^B(u_{\lambda}(s)))^*
  D_{\mathcal G}B_\lambda(u_{\lambda}(s))G(s,J_\lambda^B(u_{\lambda}(s)))
  \right]\,\d s
  \qquad\forall\,t\in[0,T]\,,\quad\P\text{-a.s.}
\end{align}
On the left-hand side, the coercivity of $A$ in assumption {\bf A} and the definition 
of $\widehat B_\lambda$ and $A_\lambda$ give
\begin{align*}
  &\widehat B_\lambda(u_{\lambda}(t)) +
  \int_0^t\left(A_\lambda(\partial_t u_\lambda^d(s)), \partial_t u_\lambda^d(s)\right)_H\,\d s\\
  &\geq\widehat B(J_\lambda^B(u_{\lambda}(t))) + 
  \int_0^t\left(A_\lambda(\partial_t u_\lambda^d(s)), 
  J_\lambda^A(\partial_t u_\lambda^d(s))\right)_H\,\d s
  +\lambda\int_0^t\norm{A_\lambda(\partial_t u_\lambda^d(s))}_H^2\,\d s\\
  &\geq \widehat B(J_\lambda^B(u_{\lambda}(t))) + 
  c_A\int_0^t\norm{J_\lambda^A(\partial_t u_\lambda^d(s))}_H^2\,\d s
  -c_A^{-1}T
  +\lambda\int_0^t\norm{A_\lambda(\partial_t u_\lambda^d(s))}_H^2\,\d s\,.
\end{align*}
On the right-hand side, first of all it is clear that 
\[
  \widehat B_\lambda(u_0) \leq \widehat B(u_0) \in L^1(\Omega)\,.
\]
Secondly, assumption {\bf F}, 
the weighted Young inequality, and the uniform boundedness 
of $(R_\lambda)_\lambda$ in $\cL(V,V)$
imply that,
for every $\delta>0$,
\begin{align*}
  &\int_0^t
  \left(F(s,R_\lambda J_\lambda^B(u_\lambda(s))), \partial_tu_{\lambda}^d(s)\right)_H\,\d s
  \leq
  \int_0^t
  \norm{F(s,R_\lambda J_\lambda^B(u_\lambda(s)))}_H
  \norm{\partial_tu_{\lambda}^d(s)}_H\,\d s \\
  &\qquad\leq\delta\int_0^t\norm{\partial_t u_\lambda^d(s)}_H^2\,\d s +
  \frac{1}{4\delta}\left(\norm{h_F}_{L^1(0,T)} + 
  C_F\int_0^t\norm{R_\lambda J_\lambda^B(u_\lambda(s))}_V^p\,\d s\right)\\
  &\qquad\leq \delta\int_0^t\norm{\partial_t u_\lambda^d(s)}_H^2\,\d s
  +M_\delta\left(1+\int_0^t\norm{J_\lambda^B(u_\lambda(s))}_V^p\,\d s\right)\,,
\end{align*}
where $M_\delta >0$ is a positive constant independent of $\lambda$.
Noting further that 
\[
  \int_0^t\norm{\partial_t u_\lambda^d(s)}_H^2\,\d s\leq
  2\int_0^t\norm{J_\lambda^A(\partial_t u_\lambda^d(s))}_H^2\,\d s
  +2\lambda^2\int_0^t\norm{A_\lambda(\partial_t u_\lambda^d(s))}_H^2\,\d s\,,
\]
taking $\lambda\in(0,1)$ and choosing $\delta$ sufficient small, 
rearranging the terms in \eqref{ito_lambda} we infer that there
exists a constant $M>0$ independent of $\lambda$ such that
\begin{align*}
  &\widehat B(J_\lambda^B(u_{\lambda}(t))) 
  +\frac{c_A}2\int_0^t\norm{J_\lambda^A(\partial_t u_\lambda^d(s))}_H^2\,\d s
  +\frac\lambda2\int_0^t\norm{\partial_tu_\lambda^d(s)}_H^2\,\d s\\
  &\leq M\left(1+\int_0^t\norm{J_\lambda^B(u_\lambda(s))}_V^p\,\d s\right)
  +\int_0^t
  \left(B_\lambda(u_{\lambda}(s)),G(s,J_\lambda^B(u_{\lambda}(s)))\,\d W(s)\right)_H\\
  &\quad+\frac12\int_0^t
  \operatorname{Tr}\left[
  G(s,J_\lambda^B(u_{\lambda}(s)))^*
  D_{\mathcal G}B_\lambda(u_{\lambda}(s))G(s,J_\lambda^B(u_{\lambda}(s)))
  \right]\,\d s
  \qquad\forall\,t\in[0,T]\,,\quad\P\text{-a.s.}
\end{align*}
At this point, the trace term can be handled using assumption {\bf G}, Lemma~\ref{lem:res},
and the fact that $\norm{D_{\mathcal G}J_\lambda^B(u_\lambda)}_{\cL(H,H)}\leq 1$ as
\begin{align*}
  &\int_0^t\operatorname{Tr}\left[
  G(s,J_\lambda^B(u_{\lambda}(s)))^*
  D_{\mathcal G}B_\lambda(u_{\lambda}(s))G(s,J_\lambda^B(u_{\lambda}(s)))
  \right]\,\d s\\
  &=\int_0^t\operatorname{Tr}\left[G(s,J_\lambda^B(u_{\lambda}(s)))
  G(s,J_\lambda^B(u_{\lambda}(s)))^*
  D_{\mathcal G}B(J^B_\lambda(u_{\lambda}(s)))
  D_{\mathcal G}J^B_\lambda(u_{\lambda}(s))
  \right]\,\d s\\
  &=\int_0^t\operatorname{Tr}\left[L(s,J_\lambda^B(u_{\lambda}(s)))
  D_{\mathcal G}J^B_\lambda(u_{\lambda}(s))
  \right]\,\d s\\
  &\leq\int_0^t\norm{L(s,J_\lambda^B(u_{\lambda}(s)))}_{\cL^1(H,H)}
  \norm{D_{\mathcal G}J^B_\lambda(u_{\lambda}(s))}_{\cL(H,H)}\,\d s\\
  &\leq\norm{h_G}_{L^1(0,T)} + C_G\int_0^t\norm{J^B_\lambda(u_{\lambda}(s))}_V^p\,\d s
\end{align*}
Furthermore, let $x\in V$ be arbitrary and note that by {\bf B1} we have 
\begin{align*}
  \widehat B(x)&\geq 
  \widehat B_\eps(x) = \int_0^1\left(B_\eps(rx),x\right)_H\,\d r=
  \int_0^1r^{-1}\left(B_\eps(rx),J_\eps^B(rx)\right)_H\,\d r
  +\eps\int_0^1r^{-1}\norm{B_\eps(rx)}_H^2\,\d r\\
  &\geq c_B\int_0^1r^{-1}\norm{J_\eps^B(rx)}^p_V\,\d r \qquad\forall\,\eps>0\,.
\end{align*}
Letting $\eps\to0$, by Lemma~\ref{lem:res} we have 
$J_\eps^B(rx)\wto rx$ in $V$ for every $r\in[0,1]$, so that 
by the Fatou Lemma it follows that 
\beq\label{coerc_B}
  \widehat B(x) \geq \frac{c_B}p\norm{x}_V^p \qquad\forall\,x\in V\,.
\eeq
Consequently, possibly updating the value of the constant $M$, we are left with
\begin{align}
  \nonumber
  \widehat B(J_\lambda^B(u_{\lambda}(t))) 
  +\int_0^t\norm{J_\lambda^A(\partial_t u_\lambda^d(s))}_H^2\,\d s
  &\leq M\left(1+\int_0^t\widehat B(J_\lambda^B(u_{\lambda}(s))) \,\d s\right)\\
  \label{est_lam_aux}
  &+\int_0^t
  \left(B_\lambda(u_{\lambda}(s)),G(s,J_\lambda^B(u_{\lambda}(s)))\,\d W(s)\right)_H
\end{align}
for every $t\in[0,T]$, $\P$-almost surely.
The stochastic integral is a martingale, so that 
taking expectations and using the Gronwall Lemma yield
\beq
  \label{est1}
  \norm{\widehat B(J_\lambda^B(u_\lambda))}_{L^\infty_\cP(0,T; L^1(\Omega))}
  +\norm{J_\lambda^A(\partial_t u_\lambda^d)}_{L^2_\cP(\Omega; L^2(0,T; H))} \leq M\,.
\eeq
By \eqref{coerc_B} and the linear growth of $A$ in assumption {\bf A}, this implies in turn that
\beq
  \label{est2}
  \norm{J_\lambda^B(u_\lambda)}_{L^\infty_\cP(0,T; L^p(\Omega; V))}
  +\norm{A_\lambda(\partial_t u_\lambda^d)}_{L^2_\cP(\Omega; L^2(0,T; H))} \leq M\,.
\eeq
Consequently, assumptions {\bf F--G} yield directly 
\beq\label{est3}
  \norm{F(\cdot, R_\lambda J_\lambda^B(u_\lambda))}_{L^2_\cP(\Omega; L^2(0,T; H))}
  +\norm{G(\cdot, J_\lambda^B(u_\lambda))}_{L^\infty_\cP(0,T; L^p(\Omega; \cL^2(U,H)))} \leq M\,,
\eeq
from which we deduce, by comparison in \eqref{app_prob2}, that 
\beq
\label{est4}
  \norm{B_\lambda(u_\lambda)}_{L^2_\cP(\Omega; L^2(0,T; H))}\leq M\,.
\eeq
At this point, going back to \eqref{est_lam_aux} and using the estimates
\eqref{est3}--\eqref{est4} together with the Burkholder-Davis-Gundy inequality 
on the stochastic integral, we get by a standard argument that 
\beq
  \label{est5}
   \norm{\widehat B(J_\lambda^B(u_\lambda))}_{L^1_\cP(\Omega; L^\infty(0,T))}
   +\norm{J_\lambda^B(u_\lambda)}_{L^p_\cP(\Omega; L^\infty(0,T; V))} \leq M\,,
\eeq
which in turn implies, again by assumptions {\bf F--G}, that, setting $p_\nu:=p/\nu$,
\beq\label{est6}
  \norm{F(\cdot, R_\lambda J_\lambda^B(u_\lambda))}_{L^2_\cP(\Omega; L^2(0,T; H))}
  +\norm{G(\cdot, J_\lambda^B(u_\lambda))}_{L^{p_\nu}_\cP(\Omega; 
  L^\infty(0,T; \cL^2(U,H)))} \leq M\,.
\eeq
It follows in particular by the H\"older inequality that, setting $q_\nu:={2p_\nu}/{(p_\nu+2)}$,
\[
  \norm{B_\lambda(u_\lambda)G(\cdot, J_\lambda^B(u_\lambda))}_{
  L^{q_\nu}_\cP(\Omega; L^2(0,T; \cL^2(U,\erre)))}\leq M\,,
\]
where by assumption on $\nu$ we always have that $q_\nu>1$.
Hence, going back again to \eqref{est_lam_aux}
the Burkholder-Davis-Gundy inequality and the Gronwall lemma allow
to refine the moment estimates as
\beq
  \label{est_refined1}
  \norm{\widehat B(J_\lambda^B(u_\lambda))}_{L^{q_\nu}_\cP(\Omega; L^\infty(0,T))}
   +\norm{J_\lambda^B(u_\lambda)}_{L^{pq_\nu}_\cP(\Omega; L^\infty(0,T; V))} \leq M
\eeq
and
\beq
  \label{est_refined2}
  \norm{J_\lambda^A(\partial_t u_\lambda^d)}_{L^{2q_\nu}_\cP(\Omega; L^2(0,T; H))}
  +\norm{A_\lambda(\partial_t u_\lambda^d)}_{L^{2q_\nu}_\cP(\Omega; L^2(0,T; H))} \leq M\,,
\eeq
Lastly, the classical result \cite[Lem.~2.1]{fland-gat} by {\sc Flandoli \& Gatarek} ensures that 
\beq
  \label{est7}
  \norm{I_\lambda:=
  \int_0^\cdot G(s, J_\lambda^B(u_\lambda(s)))}_{L^{p_\nu}_\cP(\Omega; 
  W^{\eta,p_\nu}(0,T; H))}
  \leq M_\eta \qquad\forall\,\eta\in(0,1/2)\,,
\eeq
yielding by comparison in \eqref{app_prob1} that 
\beq\label{est8}
  \norm{u_\lambda}_{L^2_\cP(\Omega; H^1(0,T; H))+
  L^{p_\nu }_\cP(\Omega; W^{\eta,p_\nu}(0,T; H))}
  \leq M_\eta \qquad\forall\,\eta\in(0,1/2)\,.
\eeq

\subsection{Passage to the limit}
\label{ssec:pass_lim}
First of all, note that by assumption on $\nu$ in {\bf G}, 
we always have that $p_\nu=p/\nu > 2$ and $q_\nu>1$: hence, 
we can fix $\eta \in (\frac1{p_\nu}, \frac12)$, so that $\eta p_\nu >1$.
Since $V\embed H$ compactly,
by the classical Aubin-Lions-Simon compactness results \cite[Cor.~4--5, p.~85]{Simon} we have
\begin{align*}
  W^{\eta,p_\nu}(0,T; H)\embed C^0([0,T]; V^*)
  \qquad&\text{compactly}\,,\\
  L^\infty(0,T; V) \cap \left( H^1(0,T; H) +
  W^{\eta,p_\nu}(0,T; H)\right)\embed C^0([0,T]; H)
  \qquad&\text{compactly}\,.
\end{align*}
Let us put then
\[
  \mathcal X:= C^0([0,T]; H) \times C^0([0,T]; V^*) \times C^0([0,T]; U)\,.
\]
By the estimates \eqref{est5}--\eqref{est8} and the compactness inclusions above, 
using the Prokhorov theorem we readily infer that 
\[
  \text{the laws of}\quad\{(J_\lambda^B(u_\lambda), I_\lambda, W)\}_\lambda \quad
  \text{are tight on $\mathcal X$}\,.
\]
By the Skorokhod theorem \cite[Thm.~2.7]{ike-wata}, there exist
then a probability space $(\hat\Omega, \hat\cF,\hat\P)$, a sequence
of measurable random variables
\[
  \phi_\lambda:(\hat\Omega,\hat\cF)\to(\Omega,\cF)\,,\qquad\lambda>0\,,
\]
with $\P\circ\phi_\lambda=\hat\P$ for all $\lambda>0$, and 
some measurable random variables 
\[
  (\hat u, \hat I, \hat W):(\hat\Omega, \hat\cF) \to \mathcal X
\]
such that, setting $\hat u_\lambda:=u_\lambda\circ \phi_\lambda$,
\begin{align}
  \label{conv1}
  J_\lambda^B(\hat u_\lambda) \to \hat u 
  \qquad&\text{in } C^0([0,T]; H)\,, \quad\hat\P\text{-a.s.}\,,\\
  \label{conv2}
  \hat I_\lambda:=I_\lambda\circ\phi_\lambda \to \hat I
  \qquad&\text{in } C^0([0,T]; V^*)\,, \quad\hat\P\text{-a.s.}\,,\\
  \label{conv3}
  \hat W_\lambda:=W\circ\phi_\lambda \to \hat W
   \qquad&\text{in } C^0([0,T]; U)\,, \quad\hat\P\text{-a.s.}\,.
\end{align}
Furthermore,
by \eqref{est1}--\eqref{est8} and the fact that 
composition with $\phi_\lambda$ preserves the laws, we also infer the convergences
\begin{align}
  \label{conv4}
  J_\lambda^B(\hat u_\lambda) \to \hat u
  \qquad&\text{in } L^\ell(\hat\Omega; C^0([0,T]; H))\quad
  \forall\,\ell\in[1,pq_\nu)\,,\\
  \label{conv5}
  J_\lambda^B(\hat u_\lambda) \wstarto \hat u
  \qquad&\text{in } L^{pq_\nu}(\hat\Omega; L^\infty(0,T; V))\,,\\
  \label{conv6}
  \partial_t\hat u_\lambda^d \wto \hat u'
  \qquad&\text{in } L^2(\hat\Omega; L^2(0,T; H))\,,\\
  \label{conv7}
  B_\lambda(\hat u_\lambda) \wto \hat w
  \qquad&\text{in } L^2(\hat\Omega; L^2(0,T; H))\,,\\
  \label{conv8}
  A_\lambda(J_\lambda^A(\hat u_\lambda)) \wto \hat v
  \qquad&\text{in } L^2(\hat\Omega; L^2(0,T; H))\,,
\end{align}
for some $\hat u',\hat w, \hat v\in L^2(\hat\Omega; L^2(0,T; H))$.
Now,
 noting that $pq_\nu>2$, 
by the 
strong-weak closure of the maximal monotone operator $B$  we readily get that
\[
  \hat u\in D(B)\,, \quad
  \hat w= B(\hat u) \qquad\text{a.e.~in } \hat\Omega\times(0,T)\,.
\]
Consequently, by the strong monotonicity of $B$ in {\bf B1}
we obtain
\[
  c_B\norm{J_\lambda^B(\hat u_\lambda)-\hat u}_{L^p(\hat \Omega; L^p(0,T; V))}^p
  \leq\E\int_0^T
  \left(B_\lambda(\hat u_\lambda(s)) - \hat w(s), J_\lambda^B(\hat u_\lambda(s))-\hat u(s)
  \right)_H\,\d s \to 0\,,
\]
so that 
\beq
  \label{conv9}
  J_\lambda^B(\hat u_\lambda)\to \hat u \qquad\text{in } L^p(\hat \Omega; L^p(0,T; V))\,.
\eeq
Since $R_\lambda\to I$ in $\cL_s(V,V)$, the family 
$(R_\lambda)_\lambda$ is uniformly bounded in $\cL(V,V)$
by the Banach-Steinhaus theorem, hence the strong convergence \eqref{conv9} yields
\[
  R_\lambda J_\lambda^B(\hat u_\lambda)\to \hat u 
  \qquad\text{in } L^p(\hat \Omega; L^p(0,T; V))\,.
\]
At this point, using {\bf F} it is immediate to infer that 
\beq
  \label{conv10}
  F(\cdot, R_\lambda J_\lambda^B(\hat u_\lambda))\to F(\cdot, \hat u) 
  \qquad\text{in } L^2(\hat \Omega; L^2(0,T; H))\,,
\eeq
while {\bf G} gives
\begin{align}
  \label{conv11}
  G(\cdot, J_\lambda^B(\hat u_\lambda))\to G(\cdot, \hat u) 
  \qquad&\text{in } L^\ell(\hat \Omega; C^0([0,T]; \cL^2(U,H))) \quad\forall\,\ell\in[1,pq_\nu)\,,\\
  \label{conv11'}
  G(\cdot, J_\lambda^B(\hat u_\lambda))\to G(\cdot, \hat u) 
  \qquad&\text{in } L^p(\hat \Omega; L^p(0,T; \cL^2(U,H)))\,.
\end{align}

Now, by definition of $\phi_\lambda$, from \eqref{app_prob1}--\eqref{app_prob2} we have
\begin{align}
  \label{app_prob1_hat}
  &\hat u_\lambda(t)=u_{0} + 
  \int_0^t\partial_t \hat u_\lambda^d(s)\,\d s
  +\hat I_\lambda(t) \quad\text{in } H &&\forall\,t\in[0,T]\,,\quad\hat\P\text{-a.s.}\,,\\
  \label{app_prob2_hat}
  &\lambda\partial_t \hat u_\lambda^d + A_\lambda(\partial_t \hat u_\lambda^d)
  +B_\lambda(\hat u_\lambda) = F(\cdot, R_\lambda J_\lambda^B(\hat u_\lambda))
  \quad\text{in } H
  &&\text{a.e.~in } \hat\Omega\times(0,T)\,.
\end{align}
Let us introduce on the probability space $(\hat\Omega, \hat\cF, \hat \P)$ the filtration 
\[
  \hat\cF_{\lambda,t}:=
  \sigma\left\{\hat u_\lambda(s), \hat I_\lambda(s), \hat W_\lambda(s):\;s\leq t\right\}\,,
  \qquad t\in[0,T]\,.
\]
Using a classical argument 
(see \cite[\S~8]{dapratozab} and \cite{vall-zimm}), we have
that $\hat W_\lambda$ is an $(\hat\cF_{\lambda,t})_t$-cylindrical Wiener process.
Moreover, following the approach in \cite[\S~4.5]{SWZ18} and \cite{ScarStef-SDNL},
using the fact that $\phi_\lambda$ preserves the laws and
comparing \eqref{app_prob1} and \eqref{app_prob2}, we also deduce that,
possibly enlarging the filtration $(\hat\cF_{\lambda,t})_t$,
$\hat I_\lambda$ is the square-integrable martingale 
\[
  \hat I_\lambda = \int_0^\cdot G(s,J_\lambda^B(\hat u_\lambda(s)))\,\d \hat W(s)\,.
\]

At this point, letting $\lambda\searrow0$ in \eqref{app_prob1_hat}--\eqref{app_prob2_hat},
thanks to the convergences \eqref{conv1}--\eqref{conv11} we get 
\begin{align}
  \label{prob1_hat}
  &\hat u(t)=\hat u_{0} + 
  \int_0^t\hat u'(s)\,\d s
  +\hat I(t) \quad\text{in } V^* &&\forall\,t\in[0,T]\,,\quad\hat\P\text{-a.s.}\,,\\
  \label{prob2_hat}
  &\hat v
  +B(\hat u) = F(\cdot, \hat u) \quad\text{in } H
  &&\text{a.e.~in } \hat\Omega\times(0,T)\,.
\end{align}
We introduce the limiting filtration 
\[
  \hat\cF_{t}:=
  \sigma\left\{\hat u(s), \hat I(s), \hat W(s):\;s\leq t\right\}\,,
  \qquad t\in[0,T]\,,
\]
as well as the corresponding progressive $\sigma$ algebra $\hat \cP$. 
The strong convergence \eqref{conv3} and 
the representation results \cite[\S~8]{dapratozab} ensure again
that $\hat W$ is a $(\hat\cF_t)_t$-cylindrical Wiener process.
Furthermore, proceeding as in \cite[\S~4.5]{SWZ18} and \cite{ScarStef-SDNL},
possibly enlarging $(\hat\cF_{t})_t$ we have the representation 
\[
  \hat I = \int_0^\cdot G(s,\hat u(s))\,\d \hat W(s)\,.
\]
In particular, this shows a posteriori that $\hat I \in L^p_{\hat \cP}(\hat\Omega; C^0([0,T]; H))$.
Consequently, equation \eqref{prob1_hat} yields 
\beq
\label{1_lim}
  \hat u(t) = u_0 + \int_0^t\hat u'(s)\,\d s + \int_0^tG(s,\hat u(s))\,\d \hat W(s)
  \qquad\forall\,t\in[0,T]\,,\quad\hat\P\text{-a.s.}
\eeq
We deduce that
\[
  \hat u\in \mathcal I^{2,p}(H,H)\,, \qquad \partial_t \hat u^d = \hat u'\,, \qquad
  \hat u^s=G(\cdot, \hat u)\,.
\]

\subsection{Identification of the nonlinearity $A$}
\label{ssec:A}
The last thing that we have to show is that $\hat v \in A(\partial_t \hat u^d)$
almost everywhere. To this end, from the It\^o formula \eqref{ito_lambda},
taking expectations and fixing $t=T$
we immediately deduce 
\begin{align*}
  &\hat\E\widehat B_\lambda(\hat u_{\lambda}(T)) 
  +\lambda\hat\E\int_0^T\norm{\partial_t\hat u_\lambda^d(s)}_H^2\,\d s
  +\hat\E\int_0^T\left(A_\lambda(\partial_t \hat u_\lambda^d(s)), 
  \partial_t \hat u_\lambda^d(s)\right)_H\,\d s\\
  &= 
  \widehat B_\lambda(u_0)
  +\hat\E\int_0^T
  \left(F(s,R_\lambda J_\lambda^B(\hat u_\lambda(s))), 
  \partial_t\hat u_{\lambda}^d(s)\right)_H\,\d s\\
  &\qquad+\frac12\hat\E\int_0^T
  \operatorname{Tr}\left[
  G(s,J_\lambda^B(\hat u_{\lambda}(s)))^*
  D_{\mathcal G}B_\lambda(\hat u_{\lambda}(s))G(s,J_\lambda^B(\hat u_{\lambda}(s)))
  \right]\,\d s\,.
\end{align*}
Now, by lower semicontinuity we have
\[
  \liminf_{\lambda\searrow0}\left(
  \hat\E\widehat B_\lambda(\hat u_{\lambda}(T)) 
  +\lambda\hat\E\int_0^T\norm{\partial_t\hat u_\lambda^d(s)}_H^2\,\d s
  \right)
  \geq 
  \hat\E\widehat B(\hat u(T))\,,
\]
while from the convergences \eqref{conv6} and \eqref{conv10} it follows 
\[
  \lim_{\lambda\searrow0}
  \hat\E\int_0^T
  \left(F(s,R_\lambda J_\lambda^B(\hat u_\lambda(s))), 
  \partial_t\hat u_{\lambda}^d(s)\right)_H\,\d s
  = 
 \hat\E\int_0^T
  \left(F(s,\hat u(s)), 
  \partial_t\hat u^d(s)\right)_H\,\d s\,.
\]
As the initial term, we have $\widehat B_\lambda(u_0)\to \widehat B(u_0)$.
Furthermore, for the trace term we note that
\begin{align*}
  \operatorname{Tr}
  \left[G^*(\cdot,J_\lambda^B(\hat u_\lambda))
  D_{\mathcal G}B_\lambda(\hat u_\lambda)G(\cdot,J_\lambda^B(\hat u_\lambda))\right]
  &=\operatorname{Tr}
  \left[G(\cdot,J_\lambda^B(\hat u_\lambda))G^*(\cdot,J_\lambda^B(\hat u_\lambda))
  D_{\mathcal G}B_\lambda(\hat u_\lambda)\right]\\
  &=\operatorname{Tr}
  \left[L(\cdot,J_\lambda^B(\hat u_\lambda))
  D_{\mathcal G}J^B_\lambda(\hat u_\lambda)\right]\,.
\end{align*}
By the strong convergence \eqref{conv9}, the continuity of $L$ in
assumption {\bf G}, and Lemma~\ref{lem:res}, we have
\begin{align*}
  L(\cdot,J_\lambda^B(\hat u_\lambda)) \to L(\cdot,\hat u) \qquad&\text{in } \cL^1(H,H)\,,
  \quad\text{a.e.~in } \hat \Omega\times(0,T)\,,\\
  D_{\mathcal G}J^B_\lambda(\hat u_\lambda) \to I \qquad&\text{in } \cL_s(H,H)\,,
  \quad\text{a.e.~in } \hat \Omega\times(0,T)\,,
\end{align*}
so that 
\[
  \operatorname{Tr}
  \left[L(\cdot,J_\lambda^B(\hat u_\lambda))
  D_{\mathcal G}J^B_\lambda(\hat u_\lambda)\right] \to 
  \operatorname{Tr}
  \left[L(\cdot,\hat u)\right] \qquad\text{a.e.~in } \hat \Omega\times(0,T)\,.
\]
Noting also that by assumption {\bf G} we have
\begin{align*}
  |\operatorname{Tr}
  \left[L(\cdot,J_\lambda^B(\hat u_\lambda))
  D_{\mathcal G}J^B_\lambda(\hat u_\lambda)\right]|
  &\leq\norm{L(\cdot,J_\lambda^B(\hat u_\lambda))}_{\cL^1(H,H)}
  \norm{D_{\mathcal G}J^B_\lambda(\hat u_\lambda)}_{\cL(H,H)}\\
  &\leq
  \norm{L(\cdot,J_\lambda^B(\hat u_\lambda))}_{\cL^1(H,H)}\\
  &\leq h_G(\cdot) + C_G\norm{J_\lambda^B(\hat u_\lambda)}_V^p\,,
\end{align*}
convergence \eqref{conv9} and the
Vitali convergence theorem yield
\[
 \operatorname{Tr}
  \left[G^*(\cdot,J_\lambda^B(\hat u_\lambda))
  D_{\mathcal G}B_\lambda(\hat u_\lambda)G(\cdot,J_\lambda^B(\hat u_\lambda))\right]  \to 
  \operatorname{Tr}
  \left[L(\cdot,\hat u)\right] \qquad\text{in } L^1_{\hat\cP}(\hat\Omega; L^1(0,T))\,.
\]
Putting all this information together, we are left with 
\begin{align}
  \nonumber
  &\limsup_{\lambda\searrow0}
  \hat\E\int_0^T\left(A_\lambda(\partial_t \hat u_\lambda^d(s)), 
  \partial_t \hat u_\lambda^d(s)\right)_H\,\d s\\
  &\leq 
  \label{limsup_lam}
  \widehat B(u_0) - \hat\E\widehat B(\hat u(T))
  + \hat\E\int_0^T
  \left(F(s,\hat u(s)), 
  \partial_t\hat u^d(s)\right)_H\,\d s
  +\frac12\hat\E\int_0^T
  \operatorname{Tr}\left[L(\cdot,\hat u(s))\right]\,\d s\,.
\end{align}
At this point, from equation \eqref{1_lim} and 
the generalized It\^o formula in Proposition~\ref{prop:ito} we infer that 
\begin{align}
  \nonumber
  &\hat\E\widehat B(\hat u(T)) +
  \hat\E\int_0^T\left(\hat v(s), 
  \partial_t \hat u^d(s)\right)_H\,\d s\\
  &\qquad
  \label{limsup_lim}
  = \widehat B(u_0)
  +\hat\E\int_0^T
  \left(F(s,\hat u(s)), 
  \partial_t\hat u^d(s)\right)_H\,\d s
  +\frac12\hat\E\int_0^T
  \operatorname{Tr}\left[L(\cdot,\hat u(s))\right]\,\d s\,.
\end{align}
Hence, comparing \eqref{limsup_lam} and \eqref{limsup_lim}
we infer that 
\[
  \limsup_{\lambda\searrow0}
  \hat\E\int_0^T\left(A_\lambda(\partial_t \hat u_\lambda^d(s)), 
  \partial_t \hat u_\lambda^d(s)\right)_H\,\d s
  \leq 
  \hat\E\int_0^T\left(\hat v(s), 
  \partial_t \hat u^d(s)\right)_H\,\d s\,.
\]
Together with the weak convergences \eqref{conv6}--\eqref{conv8}
and the maximal monotonicity of $A$, 
this ensures indeed that $\hat v \in A(\hat u)$
almost everywhere in $\hat\Omega\times(0,T)$, and concludes the proof
of Theorem~\ref{thm1}.


\section{Uniqueness and existence of probabilistically strong solutions}
\label{sec:proof2}
This last section is devoted to the proof of Theorem~\ref{thm2},
showing that existence and uniqueness of 
probabilistically strong solutions hold
under the additional assumptions that $B:V\to V^*$ and 
$G(t,\cdot):H\to\cL^2(U,H)$
are linear and continuous, and $A:H\to 2^H$ is strongly monotone.
Let us point out that since by {\bf B1} the operator 
$B:V\to V^*$ is a subdifferential, 
we automatically have that $B$ is self-adjoint, and necessarily $p=2$ and
\[
  \widehat B(x)=\frac12\ip{B(x)}{x}_{V^*,V}\,, \qquad
  D_{\mathcal G}B(x)=B\in\cL(V,V^*)\,, \qquad\forall\,x\in V\,.
\]
Moreover, the strong monotonicity of $A$ reads
\[
  \left(y_1-y_2, x_1-x_2\right)_H\geq c_A\norm{x_1-x_2}_H^2
  \qquad\forall\,x_i\in D(A)\,,\quad\forall\,y_i\in A(x_i)\,, \quad i=1,2\,.
\]

First of all, we show uniqueness of martingale solutions.
Let $(\hat u_1,\hat u_1^d,\hat v_1)$
and $(\hat u_2, \hat u_2^d, \hat v_2)$ be two martingale solutions 
in the sense of Theorem~\ref{thm1} on the same stochastic 
basis $(\hat\Omega, \hat\cF, (\hat\cF_t)_t, \hat\P, \hat W)$,
with respect to some initial data $u_{0,1}, u_{0,2}\in V$.
Then, using the fact that $B$ is linear we get 
\begin{align*}
  &\hat u_1-\hat u_2 = u_{0,1}-u_{0,2} + \int_0^\cdot\partial_t(\hat u_1^d-\hat u_2^d)(s)\,\d s
  +\int_0^\cdot G(s,(\hat u_1-\hat u_2)(s))\,\d\hat W(s)\,,\\
   &\hat v_1-\hat v_2 + B(\hat u_1-\hat u_2) = F(\cdot,\hat u_1) - F(\cdot,\hat u_2)\,.
\end{align*}
The generalized It\^o formula in Proposition~\ref{prop:ito} gives then
\begin{align*}
  &\widehat B((\hat u_1-\hat u_2)(t)) + 
  \int_0^t\left((\hat v_1-\hat v_2)(s), \partial_t(\hat u_1^d-\hat u_2^d)(s)\right)_H\,\d s \\
  &=\widehat B(u_{0,1}-u_{0,2}) + 
  \int_0^t\left(F(s,\hat u_1(s))-F(s,\hat u_2(s)), \partial_t(\hat u_1^d-\hat u_2^d)(s)\right)_H\,\d s\\
  &\quad+\frac12\int_0^t\operatorname{Tr}
  \left[L(s,(\hat u_1-\hat u_2)(s))\right]\,\d s
  +\int_0^t\left(B((\hat u_1-\hat u_2)(s)), G(s,(\hat u_1-\hat u_2)(s))\,\d W(s)\right)_H
\end{align*}
for every $t\in[0,T]$, $\hat P$-almost surely.
On the left-hand side,
by the coercivity of $\widehat B$ \eqref{coerc_B} we have
\[
  \widehat B((\hat u_1-\hat u_2)(t)) \geq \frac{c_B}{2}\norm{(\hat u_1-\hat u_2)(t)}_V^2\,,
\]
while the strong monotonicity of $A$ yields
\[
  \int_0^t\left((\hat v_1-\hat v_2)(s), \partial_t(\hat u_1^d-\hat u_2^d)(s)\right)_H\,\d s
  \geq c_A\int_0^t\norm{\partial_t(\hat u_1^d-\hat u_2^d)(s)}_H^2\,\d s\,.
\]
On the right-hand side, the first term gives, by assumption {\bf B1--B2},
\[
  \widehat B(u_{0,1}-u_{0,2}) \leq \frac12\norm{B}_{\cL(V,V^*)}\norm{u_{0,1}-u_{0,2}}_V^2\,,
\]
while the second and third ones
can be handled using the Young inequality and
{\bf F--G} as
\begin{align*}
  &\int_0^t\left(F(s,\hat u_1(s))-F(s,\hat u_2(s)), \partial_t(\hat u_1^d-\hat u_2^d)(s)\right)_H\,\d s
  +\frac12\int_0^t\operatorname{Tr}
  \left[L(s,(\hat u_1-\hat u_2)(s))\right]\,\d s\\
  &\leq \frac{c_A}{2}\int_0^t\norm{\partial_t(\hat u_1^d-\hat u_2^d)(s)}_H^2\,\d s
  +\left(\frac{C_F}{2c_A} + \frac{C_G}{2}\right)
  \int_0^t\norm{(\hat u_1-\hat u_2)(s)}_V^2\,\d s\,.
\end{align*}
Taking expectations we infer then 
\begin{align*}
  &\frac{c_B}{2}\E\norm{(\hat u_1-\hat u_2)(t)}_V^2
  +\frac{c_A}2\E\int_0^t\norm{\partial_t(\hat u_1^d-\hat u_2^d)(s)}_H^2\,\d s\\
  &\leq\frac{\norm{B}_{\cL(V,V^*)}}{2}\norm{u_{0,1}-u_{0,2}}_V^2
  +\left(\frac{C_F}{2c_A} + \frac{C_G}{2}\right)
  \E\int_0^t\norm{(\hat u_1-\hat u_2)(s)}_V^2\,\d s
  \qquad\forall\,t\in[0,T]\,.
\end{align*}
The Gronwall lemma ensures then the required continuous dependence result.

As far as uniqueness is concerned, the continuous dependence property 
directly implies that if $u_{0,1}=u_{0,2}$ then
\[
  \hat u_1=\hat u_2 \quad\text{in } L^\infty(0,T; L^2(\hat\Omega; V))\,, \qquad
  \partial_t u_1^d=\partial_tu_2^d \quad\text{in } L^2_{\hat \cP}(\hat\Omega; L^2(0,T; H))\,.
\]
The second equality yields straightaway that 
\[
  \hat u_1^d = \hat u_2^d \quad\text{in } L^2_{\hat \cP}(\hat\Omega; C^0([0,T]; H))\,,
\]
while the first equality and assumption {\bf G} implies 
\[
  G(\cdot, \hat u_1)=G(\cdot, \hat u_2) \quad\text{in } L^2_{\hat \cP}(\hat\Omega; L^2(0,T; \cL^2(U,H)))\,,
\]
from which also 
\[
  \int_0^\cdot G(s, \hat u_1(s))\,\d\hat W(s)
  = \int_0^\cdot G(s, \hat u_2(s))\,\d\hat W(s) \quad\text{in } L^2_{\hat \cP}(\hat \Omega; C^0([0,T]; H))\,.
\]
Consequently, we deduce that 
\[
  \hat u_1 = \hat u_2 \quad\text{in } L^2_{\hat \cP}(\hat\Omega; C^0([0,T]; H))\,,
\]
and pathwise uniqueness of martingale solutions holds.

In order to conclude the proof of Theorem~\ref{thm2},
we are only left to show existence of probabilistically strong solutions.
This can be done in a classical way, using existence and uniqueness of martingale solutions.
In particular, we recall the following lemma due to
{\sc Gy\"ongy \& Krylov} \cite[Lem.~1.1]{gyo-kry}.
\begin{lem}
  Let $\mathcal X$ be a Polish space and $(Z_n)_n$ be a sequence
  of $\mathcal X$-valued random variables. Then $(Z_n)_n$ converges
  in probability if and only if
  for any pair of subsequences $(Z_{n_k})_k$ and $(Z_{n_j})_j$, there exists 
  a joint sub-subsequence $(Z_{n_{k_\ell}}, Z_{n_{j_\ell}})_\ell$ converging 
  in law to a probability measure $\nu$ on $\mathcal X\times\mathcal X$ such
  that $\nu(\{(z_1,z_2)\in\mathcal X\times\mathcal X: z_1=z_2\})=1$.
\end{lem}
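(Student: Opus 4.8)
The plan is to reduce the statement to the familiar characterization of convergence in probability through a \emph{Cauchy} condition. First I would fix a complete metric $\rho$ on $\mathcal X$ compatible with its topology — such a metric exists because $\mathcal X$ is Polish — and, after replacing $\rho$ by $\rho\wedge1$, assume $0\leq\rho\leq1$. Then $\rho\colon\mathcal X\times\mathcal X\to[0,1]$ is bounded and continuous, a sequence $Y_n$ tends to $Y$ in probability if and only if $\E\rho(Y_n,Y)\to0$, and, crucially, $(Z_n)_n$ converges in probability as soon as it is Cauchy in probability, i.e.~$\E\rho(Z_n,Z_m)\to0$ as $n,m\to\infty$ (extract a rapidly Cauchy subsequence converging $\P$-a.s., and check, using completeness of $\rho$, that its a.s.\ limit is the limit in probability of the whole sequence). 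So the target becomes the equivalence of the stated subsequence condition with $\E\rho(Z_n,Z_m)\to0$.

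The forward implication I expect to be routine. If $Z_n\to Z$ in probability, then every subsequence does so too; hence, given two subsequences of $(Z_n)_n$, pairing them term by term yields an $\mathcal X\times\mathcal X$-valued sequence converging in probability, and therefore in law, to the law $\nu$ of $(Z,Z)$, which is concentrated on the diagonal $\{z_1=z_2\}$. Passing to a further (sub-)subsequence changes nothing, so the required joint sub-subsequence is obtained.

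The converse is where the work lies, and I would argue by contradiction. Assume the subsequence condition holds but that $(Z_n)_n$ is \emph{not} Cauchy in probability: then there are $\eps>0$ and indices $p_i,q_i\to\infty$ with $\E\rho(Z_{p_i},Z_{q_i})\geq\eps$ for every $i$. Viewing $(Z_{p_i})_i$ and $(Z_{q_i})_i$ as two subsequences of the same original sequence, the hypothesis produces a joint sub-subsequence $(Z_{p_{i_\ell}},Z_{q_{i_\ell}})_\ell$ whose law converges to some $\nu$ with $\nu(\{z_1=z_2\})=1$. Since $\rho$ is bounded and continuous on $\mathcal X\times\mathcal X$ and vanishes on the support of $\nu$, convergence in law gives
\[
  \E\rho(Z_{p_{i_\ell}},Z_{q_{i_\ell}})\ \longrightarrow\ \int_{\mathcal X\times\mathcal X}\rho(z_1,z_2)\,\d\nu(z_1,z_2)=0,
\]
which contradicts $\E\rho(Z_{p_{i_\ell}},Z_{q_{i_\ell}})\geq\eps$. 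Hence $(Z_n)_n$ is Cauchy in probability, and therefore converges in probability by completeness.

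The only genuinely delicate point is bookkeeping: the failure of the Cauchy property must be packaged as \emph{two} subsequences of the \emph{same} sequence, exactly as the hypothesis is phrased, and one must recognize that convergence in law of the paired sequence to a diagonal-supported measure is precisely what forces the $\rho$-expectations to $0$ along the sub-subsequence. The remaining ingredients — completeness of $\mathcal X$ and the $\E\rho(Y_n,Y)\to0$ reformulation of convergence in probability — are standard, so I do not anticipate any real obstacle beyond this.
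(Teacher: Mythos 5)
Your argument is correct. Note, however, that the paper does not prove this lemma at all: it is quoted verbatim from Gy\"ongy and Krylov \cite[Lem.~1.1]{gyo-kry}, so there is no in-paper proof to compare against. What you wrote is essentially the classical argument from that reference: reduce convergence in probability to the Cauchy-in-probability property (using a bounded complete metric $\rho\wedge 1$ and completeness of the Polish space, via a rapidly Cauchy, a.s.\ convergent subsequence), obtain the forward implication by pairing the two subsequences and passing from convergence in probability of $(Z,Z)$-type pairs to convergence in law, and prove the converse by contradiction, testing the weak limit $\nu$ of the paired sub-subsequence against the bounded continuous function $\rho$, which integrates to zero against any diagonal-supported measure. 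The one bookkeeping point you flag -- that the indices $p_i,q_i$ witnessing the failure of the Cauchy property can be taken strictly increasing so that $(Z_{p_i})_i$ and $(Z_{q_i})_i$ are genuine subsequences -- is indeed the only thing to check, and it follows by choosing them inductively with $\min(p_{i},q_{i})>\max(p_{i-1},q_{i-1})$; an equivalent variant replaces $\E\rho$ by $\P(\rho>\eps)$ and uses the portmanteau theorem on the closed set $\{\rho\geq\eps\}$, but this changes nothing essential.
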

Under the additional assumptions of Theorem~\ref{thm2},
recalling the proof of Theorem~\ref{thm1},
it is readily seen that 
the Skorokhod Theorem and the just proved pathwise uniqueness of martingale solutions 
to the limit problem
ensure exactly the condition of the lemma above.
It follows then that all the arguments 
for passing to the limit as $\lambda\searrow0$
in Subsections~\ref{ssec:pass_lim}--\ref{ssec:A}
can be replicated on the original probability space $(\Omega,\cF,\P)$.
This ensures then existence of probabilistically strong solutions, 
and Theorem~\ref{thm2} is proved.

\section{Application}\label{sec:appl}

We conclude our discussion by presenting a concrete case of an SPDE
to which the abstract theory applies. Let us consider the initial and boundary
value problem
\begin{align}
 &\alpha(\partial_tu^d)\,\d t + u^s \, \d W- {\rm div}\,(D\haz \beta_1(\nabla u))
  \,  \d t +  \haz \beta_0'(u)\, \d t\ni f(\cdot,u,\nabla u) \, \d t +
  G(\cdot,u) \, \d W , \nonumber\\
&\qquad  \text{in} \ \ \Omega  \times (0,T)\times
  \OO\,, 
  \label{eq:00}\\
& u = 0 \quad \text{on} \ \ \Omega  \times (0,T)\times
  \partial \OO\,, 
  \label{eq:001}\\
&  u(0,\cdot) =u_0 \quad \text{on} \ \ \Omega \times
   \OO\,.
  \label{eq:002}
\end{align}
Here, the solution is a scalar function $u : \Omega \times (0,T)\times \OO \to \Rz$ where $\OO\subset \Rz^d$
is an open, bounded, and Lipschitz domain.
The possibly multivalued map
$\alpha : \Rz\to 2^\Rz$ is assumed to be strongly maximal monotone and
linearly bounded
with $0\in \alpha(0)$. The potential $\haz \beta_1 \in C^2(\Rz^d)$ is
asked to be nonnegative, convex, minimized at $0$, and with polynomial
growth. More precisely, we assume to be given $p\geq 2$ and $c_{\beta}>0$ such that 
$$
|D^2\haz \beta_1(\xi)|\leq \frac{1}{c_{\beta}}(1+ |\xi|^{p-2})\,,  
\quad (D\haz \beta_1(\xi) - D\haz \beta_1(\eta))\cdot (\xi -\eta)\geq
c_{\beta}|\xi-\eta|^p\qquad \forall\, \xi, \eta \in \Rz^d\,.
$$ 
The potential $\haz \beta_0\in C^2(\Rz)$ is assumed to be nonnegative,
convex, minimized at $0$, with 
$\haz \beta_0''$ bounded by a polynomial of order $p-2$. 
We assume $f : (0,T)\times \Rz\times \Rz^d\to \Rz$ to be
Carath\'eodory and
uniformly Lipschitz in the third variable with $t \mapsto f(t,0,0)\in L^1(0,T)$, 
$G$ to fulfil 
Assumption {\bf G} with 
$$H=L^2(\OO), \qquad V=W^{1,p}_0(\OO)$$
and the initial datum $u_0$ to belong to $W^{1,p}_0(\OO)$. 
Note that the homogenous
Dirichlet boundary conditions are chosen here for the sake of
definiteness, other choices also being possible.
  Note assumption {\bf G} allows for most of the classical 
  choices of the noise coefficient. These include, 
  but are not limited to, the natural case of 
  additive noise $G\in \cL^2(U,V)$ as well as 
  the case of multiplicative noise of superposition
  type. An explicit form for $G$ could be
$$G(u) \, {\rm d} W = \sum_{j=1}^\infty h_j(u) \, {\rm d} W_j$$
where $W_j$ are independent real Brownian motions and $h_j:H \to H$
are $L_{j}$-Lipschitz continuous functions with $\sum_{j}L_j^2<\infty$
and $h_j(V)\subset V$. As for the map $h_j$, in addition to {\it local}
functions  one could consider some {\it nonlocal} operators as well,
possibly defined via convolutions.

We introduce a variational formulation of problem 
\eqref{eq:00} in $(V,H,V^*)$ by letting the operators $A:H \to 2^H$
and $F:(0,T)\times V \to H$ and the functional $\haz B: V \to
(-\infty,+\infty]$ be defined as
\begin{align*}
 &\xi \in A(w) \ \Leftrightarrow \ \xi \in \alpha(w) \ \text{a.e. in
  $\OO$}\,, &&\xi,w\in H\,,\\[2mm]
&F(t,u) = f(t,u, \nabla u) \ \text{a.e. in
  $\OO$}\,, &&u\in V\,,\\
& \haz B(u) := 
     \displaystyle \int_{\OO} \left(\haz \beta_1(\nabla u(x)) + \haz \beta_0(u(x)) \right)\,
  \d x\,, &&u\in V\,.
\end{align*}
The assumptions on $\alpha$ entail that $A$ is strongly maximal
monotone and linearly bounded with $0\in A(0)$, so that Assumption {\bf A} holds. As $\haz \beta_1$ and $\haz \beta_0$
are smooth and $\haz \beta_1$ has
$p$ growth 
Assumptions {\bf B1--B2}, follow as well. The uniform Lipschitz
continuity 
of $f$ entails Assumption {\bf F} with $h_F(\cdot)=f(\cdot,0,0)\in
L^1(0,T)$. Under the above provisions, the abstract relation
\eqref{eq'}, complemented by the initial condition, is the variational
formulation of \eqref{eq:00} in $(V,H,V^*)$. 
As the assumptions of Subsection \ref{ssec:assumptions} are
satisfied, Theorem~\ref{thm1} entails the existence of solutions. In
particular, we have the following.

\begin{cor}
Under the assumptions of this section, problem \eqref{eq:00}--\eqref{eq:002}
admits an analytically strong martingale solution in the sense of Theorem~\emph{\ref{thm1}}.
\end{cor}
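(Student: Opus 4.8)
The plan is to verify that the variational data $(A,B,\haz B,F,G)$ introduced above fulfil all the requirements {\bf U0}, {\bf A}, {\bf B1}, {\bf B2}, {\bf F}, {\bf G} of Subsection~\ref{ssec:assumptions}, and then to invoke Theorem~\ref{thm1}. The ambient spaces are $H=L^2(\OO)$ and $V=W^{1,p}_0(\OO)$; since $\OO\subset\erre^d$ is bounded and Lipschitz, the embedding $V\embed H$ is continuous, dense, and compact by Rellich--Kondrachov, and a regularizing family $(R_\lambda)_\lambda\subset\cL(H,V)$ converging to the identity in $\cL_s(V,V)$ is available (for instance via a smooth truncation/mollification adapted to $W^{1,p}_0(\OO)$). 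Assumption {\bf U0} is exactly the hypothesis $u_0\in W^{1,p}_0(\OO)$, while {\bf A} follows because the Nemytskii operator associated with the maximal monotone graph $\alpha$ is maximal monotone on $L^2(\OO)$ and the pointwise linear growth and strong monotonicity of $\alpha$ (with $0\in\alpha(0)$) integrate over $\OO$ to the global bounds $\norm{y}_H\le C_A(1+\norm{x}_H)$ and $(y,x)_H\ge c_A\norm{x}_H^2-c_A^{-1}$, with constants depending on $|\OO|$ and on the data of $\alpha$; in fact $A$ is then strongly monotone, which is the point relevant for Theorem~\ref{thm2}.

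The bulk of the work is {\bf B1--B2}. The functional $\haz B(u)=\int_\OO\big(\haz\beta_1(\nabla u)+\haz\beta_0(u)\big)\,\d x$ is convex, nonnegative, vanishes at $0$, is lower semicontinuous on $H$, and, because of the $p$-growth of $\haz\beta_1$ and $\haz\beta_0$ forced by the bounds on $D^2\haz\beta_1$ and $\haz\beta_0''$, it is finite and continuous on $V$; hence its $H$-subdifferential agrees on $D(B)$ with the single-valued operator $B(u)=-\div\big(D\haz\beta_1(\nabla u)\big)+\haz\beta_0'(u):V\to V^*$. Testing $B(x_1)-B(x_2)$ against $x_1-x_2$ and using $\big(D\haz\beta_1(\xi)-D\haz\beta_1(\eta)\big)\cdot(\xi-\eta)\ge c_\beta|\xi-\eta|^p$ together with monotonicity of $\haz\beta_0'$ gives $\ip{B(x_1)-B(x_2)}{x_1-x_2}_{V^*,V}\ge c_\beta\norm{\nabla(x_1-x_2)}_{L^p(\OO)}^p$, which after the Poincar\'e inequality on $W^{1,p}_0(\OO)$ is the strong monotonicity of {\bf B1}. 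The $C^2$ regularity and the $(p-2)$-polynomial growth of $D^2\haz\beta_1$ and $\haz\beta_0''$ yield G\^ateaux differentiability of $B:V\to V^*$ with $D_{\mathcal G}B(x)h=-\div\big(D^2\haz\beta_1(\nabla x)\nabla h\big)+\haz\beta_0''(x)h$; a triple H\"older estimate with exponents $\tfrac{p}{p-2},p,p$ on $\int_\OO D^2\haz\beta_1(\nabla x)\nabla h\cdot\nabla k$ and $\int_\OO\haz\beta_0''(x)hk$, using $W^{1,p}(\OO)\embed L^p(\OO)$, produces $\norm{D_{\mathcal G}B(x)}_{\cL(V,V^*)}\le C_B(1+\norm{x}_V^{p-2})$; and the weak-operator continuity $D_{\mathcal G}B\in C^0(V;\cL_w(V,V^*))$ follows by extracting a subsequence along which $\nabla x_j\to\nabla x$ and $x_j\to x$ a.e.\ and invoking a generalized dominated convergence argument (the dominating families $1+|\nabla x_j|^{p-2}$, $1+|x_j|^{p-2}$ converge in $L^{p/(p-2)}(\OO)$ when $p>2$ and are equibounded when $p=2$), then upgrading to the full sequence by uniqueness of the limit.

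Finally, the Carath\'eodory property and Lipschitz continuity of $f$, together with $W^{1,p}(\OO)\embed L^2(\OO)$ (valid since $p\ge2$ on a bounded domain), give both inequalities of Assumption {\bf F}, the forcing term $h_F$ being built from $t\mapsto\norm{f(t,0,0)}_{L^2(\OO)}^2$; and Assumption {\bf G} — including the trace-class condition on $L=GG^*D_{\mathcal G}B$ — is imposed directly on the noise coefficient in the hypotheses of this section, so nothing remains to check there. With {\bf U0}, {\bf A}, {\bf B1}, {\bf B2}, {\bf F}, {\bf G} all in force, Theorem~\ref{thm1} furnishes a probability space, a cylindrical Wiener process, and a triple solving \eqref{eq'} in the stated strong martingale sense together with the energy equality; unravelling the definitions of $A$, $F$, and $\haz B$ identifies this exactly with an analytically strong martingale solution of \eqref{eq:00}--\eqref{eq:002}.

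The only genuinely delicate point is the weak-operator continuity of $D_{\mathcal G}B$ demanded by {\bf B2}: one must pass to the limit in the bilinear forms $\int_\OO D^2\haz\beta_1(\nabla x_j)\nabla h\cdot\nabla k$ for fixed $h,k\in V$ along $V$-convergent sequences $x_j$, which is precisely where a.e.\ convergence of the gradients and the $(p-2)$-growth of $D^2\haz\beta_1$ have to cooperate. Everything else is a routine transfer of pointwise structural hypotheses on $\alpha$, $\haz\beta_0$, $\haz\beta_1$, $f$ to the corresponding integral functionals and Nemytskii operators.
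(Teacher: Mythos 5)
Your proposal is correct and follows essentially the same route as the paper: Section~\ref{sec:appl} proves the corollary precisely by checking that the Nemytskii-type operators $A$, $F$, the integral functional $\haz B$, and the noise coefficient $G$ satisfy Assumptions \textbf{U0}, \textbf{A}, \textbf{B1--B2}, \textbf{F}, \textbf{G} for $H=L^2(\OO)$, $V=W^{1,p}_0(\OO)$, and then invokes Theorem~\ref{thm1}. Your version merely spells out the details (Poincar\'e for \textbf{B1}, the H\"older and dominated-convergence arguments for \textbf{B2}) that the paper leaves implicit, so there is no substantive difference in approach.
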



\section*{Acknowledgement}
LS is funded by the Austrian Science Fund (FWF) through the
Lise-Meitner project M\,2876. US is partially supported by 
the Austrian Science Fund (FWF) through projects F\,65, I\,4354,
P\,32788, and by the Vienna Science and Technology Fund through
project MA14-009.


\bibliographystyle{abbrv}


\end{document}